\newtheorem{theorem}{Theorem}[section] 
\newtheorem{lemma}[theorem]{Lemma} 
\newtheorem{proposition}[theorem]{Proposition} 
\newtheorem{corollary}[theorem]{Corollary} \numberwithin{equation}{section} 
\theoremstyle{definition}
\newtheorem{remark}[theorem]{Remark} 
\newtheorem{example}[theorem]{Example}
\title{Hermitian Self-Dual Cyclic Codes of Length $p^a$ over ${\rm GR}({p^2},s)$} 
\author{Somphong~Jitman, San~Ling, and Ekkasit~Sangwisut\thanks{S. Jitman is with the  Department of Mathematics, Faculty of Science, 
              Silpakorn University, Nakhon Pathom 73000,  Thailand
  (email: sjitman@gmail.com).} 
\thanks{S. Ling is with the Division of Mathematical Sciences,
  School of Physical and Mathematical Sciences, Nanyang Technological
  University,  21 Nanyang Link, Singapore 637371,
  Republic of Singapore
  (email:  lingsan@ntu.edu.sg).} 
\thanks{E. Sangwisut is  with the Department of Mathematics and Computer Science, Faculty of Science, Chulalongkorn University, Bangkok 10330, Thailand  (email:  e.sangwisut@gmail.com).} 
\thanks{This work is partially   supported by the National Research Foundation of Singapore under Research Grant NRF-CRP2-2007-03. E. Sangwisut is also supported by the Institute for the Promotion of Teaching Science and Technology of Thailand.}} 
\date{}
\renewcommand\footnotemark{}
\begin{document} 
\maketitle 
\begin{abstract}

	In this paper, we study cyclic codes  over the Galois ring ${\rm GR}({p^2},s)$. The main result is the  characterization and  enumeration of Hermitian self-dual cyclic codes of length $p^a$ over ${\rm GR}({p^2},s)$. Combining with some known results  and the standard  Discrete Fourier Transform decomposition, we arrive at  the characterization and enumeration of Euclidean self-dual cyclic codes of any length over ${\rm GR}({p^2},s)$. Some corrections to results on Euclidean self-dual cyclic codes of even length over $\mathbb{Z}_4$ in Discrete Appl. Math. 128, (2003), 27  and Des. Codes Cryptogr. 39, (2006), 127 are provided. 
\end{abstract}

\section{Introduction}

Cyclic and self-dual codes    over finite fields have  been extensively  studied for both   theoretical and practical reasons  (see \cite{JLX2011},  \cite{NRS2006},  and references therein). These concepts have   been extended and studied   over the  ring $\mathbb{Z}_4$  (see \cite{AO2003}, \cite{B2003},  and \cite{DL2006}), after it has been proven that some binary non-linear codes, such as the Kerdock, Preparata, and Goethals codes, are the Gray image of linear cyclic codes over $\mathbb{Z}_4$ in \cite{HKCSS1994}. Later on,  the study of cyclic and self-dual codes  has    been generalized to codes over $\mathbb{Z}_{p^r}$ and  Galois rings (see \cite{DP2007}, \cite{KLL2008}, \cite{KLL2012}, \cite{SE2009}, and  \cite{SE2009-2}). 

In \cite{B2003}, the structure of cyclic and Euclidean self-dual cyclic codes of oddly even  length ($2m$, where $m$ is odd) over $\mathbb{Z}_4$ has been studied  via the Discrete Fourier Transform decomposition. This idea has been extended to   the case of all even lengths in \cite{DL2006}. However,  some   results concerning Euclidean self-dual cyclic codes over $\mathbb{Z}_4$ in \cite{B2003} and   \cite{DL2006}  are not correct. The corrections to these are provided in Section \ref{sec4}.   Using a spectral approach and a generalization of the results in  \cite{B2003} and \cite{DL2006},  the structure of   cyclic codes over $\mathbb{Z}_{p^e}$, for any prime $p$, has been studied in  \cite{DP2007}. 
A nice classification of cyclic and Euclidean self-dual cyclic codes of length $p^a$ over ${\rm GR}({p^2},s)$ has been given  using a different  approach in \cite{KLL2008}, \cite{KLL2012},  and  \cite{SE2009}.

In this paper, we focus on the  characterization and enumeration   of Hermitian self-dual cyclic codes of length $p^a$ over ${\rm GR}({p^2},s)$ and their application to  the enumeration of  Euclidean self-dual cyclic codes of any length over ${\rm GR}({p^2},s)$. Using the standard Discrete Fourier Transform decomposition viewed as an extension of \cite{DP2007}, a  cyclic code  $C$ of any length $n=mp^a$, where $p$ is a prime and $p\nmid m$,  over  ${\rm GR}({p^2},s)$ can be viewed as a product of cyclic codes of length $p^a$ over some Galois extensions of ${\rm GR}({p^2},s)$.  Euclidean self-dual cyclic codes  can be characterized based on  this decomposition.
%Moreover, we show that $C$ is Euclidean self-dual if and only if the component codes are Euclidean  self-dual codes, Hermitian self-dual codes, or pairs of codes and their Euclidean duals. 
Applying some  known results concerning cyclic and Euclidean self-dual cyclic codes of length $p^a$ in \cite{KLL2008} and \cite{KLL2012} and our result on Hermitian self-dual cyclic codes of length $p^a$, the number of Euclidean self-dual cyclic codes of arbitrary length over ${\rm GR}({p^2},s)$ can be determined. Finally, we point out some mistakes   on  Euclidean self-dual cyclic codes of even length over $\mathbb{Z}_4$ in \cite{B2003}  and \cite{DL2006}.  The corrections to these are given as well.

The paper is organized as follows. Some preliminary concepts and results are recalled in Section \ref{sec2}. 
In Section \ref{sec3}, we prove the main result concerning the number of Hermitian self-dual cyclic codes of length $p^a$ over ${\rm GR}({p^2},s)$.  An application to the  enumeration of  Euclidean self-dual cyclic codes of any length over ${\rm GR}({p^2},s)$ and corrections to  \cite{B2003} and \cite{DL2006} are discussed in Section \ref{sec4}. A conclusion is provided  in Section \ref{sec5}.

\section{Preliminaries}  \label{sec2}
In this section, we  recall some definitions and basic properties of cyclic codes over  the  Galois ring ${\rm GR}(p^2,s)$.  

\subsection{Cyclic Codes over ${\rm GR}(p^2,s)$}
  For a prime $p$ and a positive integer $s$, the  Galois ring ${\rm GR}(p^2,s)$ is the Galois extension of the integer residue ring $\mathbb{Z}_{p^2}$ of degree $s$.   Let $\xi$ be an element in ${\rm GR}(p^2,s)$ that  generates the Teichm\"{u}ller set $\mathcal{T}_s$ of  ${\rm GR}(p^2,s)$. In other words, $\mathcal{T}_s=\{0,1,\xi,\xi^2,\dots, \xi^{p^{s}-2}\}$.
Then every  element  in  ${\rm GR}(p^2,s)$   has a unique $p$-adic expansion of the form
\[\alpha=a+bp,\]
where $a,b\in \mathcal{T}_s$.  If $s$ is even,  let $\bar{~}$  denote the automorphism 
on   ${\rm GR}(p^2,s)$ defined by
\begin{align}\label{aut}
	\overline{\alpha}=a^{p^{s/2}}+b^{p^{{s}/{2}}}p.
	\end{align} 
For more details  concerning  Galois rings, we refer the readers to \cite{W2003}.  

A  {\em cyclic code}  of length $n$ over ${\rm GR}(p^2,s)$ is a ${\rm GR}(p^2,s)$-submodule of the ${\rm GR}(p^2,s)$-module $({\rm GR}(p^2,s))^n$ which is invariant under the cyclic shift.      It is well known that every  cyclic code $C$ of length $n$ over ${\rm GR}(p^2,s)$ can   be regarded as an ideal in the quotient polynomial ring ${\rm GR}(p^2,s)[X]/\langle X^n-1\rangle$ and  represented by  its polynomial  representation \[\left\{\sum_{i=0}^{n-1}c_iX^i \,\middle\vert\, (c_0,c_1,\dots, c_{n-1})\in C\right\}.\]

For a given cyclic   code $C$ of length $n$ over  ${\rm GR}(p^2,s)$,  denote by $C^{\perp_{\rm E}}$ the   {\em Euclidean dual} of  $C$  defined with respect to the form 
\begin{align*}
	\langle \boldsymbol{u},\boldsymbol{v}\rangle_{\rm E}:=\sum_{i=0}^{n-1} u_iv_i
\end{align*}
where  $ \boldsymbol{u}= \sum_{i=0}^{n-1} u_iX^i $ and $ \boldsymbol{v}= \sum_{i=0}^{n-1} v_iX^i$.
The code $C$ is said to be {\em Euclidean self-dual} if  $C=C^{\perp_{\rm E}}$. 

In addition, if $s$ is even,  
we can also consider the  {\em Hermitian dual}  $C^{\perp_{\rm H}}$ of  $C$  defined with respect to the form 
\begin{align*}
	\langle \boldsymbol{u},\boldsymbol{v}\rangle_{\rm H}:=\sum_{i=0}^{n-1} u_i\overline{v_i}.
\end{align*} 
The code $C$ is said to be {\em Hermitian self-dual} if  $C=C^{\perp_{\rm H}}$.

The goal of this paper is to characterize and  enumerate   self-dual codes over ${\rm GR}(p^2,s)$.
For convenience, let 	$N({\rm GR}(p^2,s),n)$, $N_{\rm E}({\rm GR}(p^2,s),n)$, and $N_{\rm H}({\rm GR}(p^2,s),n)$ denote the numbers of cyclic codes, Euclidean self-dual cyclic codes, and Hermitian  self-dual cyclic codes of length $n$ over  ${\rm GR}(p^2,s)$, respectively.

\subsection{Some Results on Cyclic Codes of Length $p^a$ over ${\rm GR}(p^2,s)$}
In this subsection, we recall some results concerning cyclic and Euclidean self-dual cyclic codes of length $p^a$ over ${\rm GR}(p^2,s)$ in terms of  ideals in ${\rm GR}(p^2,s)[X]/\langle X^{p^a}-1\rangle$.

In \cite{KLL2008}, it has been shown that all the ideals in ${\rm GR}(p^2,s)[X]/\langle X^{p^a}-1\rangle$ have a unique representation. 
\begin{proposition}
	[{\cite[Theorem 3.8]{KLL2008}}]\label{Structure-cyclic} Every ideal in ${\rm GR}(p^2,s)[X]/\langle X^{p^a}-1\rangle$ can be uniquely represented in the form of 
	\begin{align}
		\label{cyclic} C=\langle (X-1)^{i_0}+p\sum_{j=0}^{i_1-1}h_j(X-1)^j, p(X-1)^{i_1} \rangle, 
	\end{align}
	where $i_0,i_1$ are integers such that $0\leq i_0<p^a$, $0\leq i_1\leq \min\{i_0, p^{a-1}\}$, $i_0+i_1\leq p^a$, and $h_j$ is an element in the Teichm\"{u}ller set $ \mathcal{T}_s$ for all $j$. 
\end{proposition}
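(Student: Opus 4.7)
The plan is to parametrise each ideal $C$ of $R={\rm GR}(p^2,s)[X]/\langle X^{p^a}-1\rangle$ by two intrinsic invariants $i_0$ and $i_1$ coming from the $p$-adic filtration, construct a canonical generating set of the prescribed shape, and verify uniqueness coefficient by coefficient. First, $X^{p^a}-1\equiv(X-1)^{p^a}\pmod p$ yields
\[R/pR \;\cong\; \mathbb{F}_{p^s}[X]/\langle(X-1)^{p^a}\rangle,\]
a finite chain ring whose ideals are precisely $\langle(X-1)^i\rangle$ for $0\leq i\leq p^a$. Since $p^2=0$, multiplication by $p$ is an $R$-module isomorphism $R/pR\xrightarrow{\,\sim\,}pR$. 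To an ideal $C\not\subseteq pR$, I would attach the invariants $i_0,i_1$ via $(C+pR)/pR=\langle(X-1)^{i_0}\rangle$ and $C\cap pR=p\langle(X-1)^{i_1}\rangle$.

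Next I would construct the generator. Pick $f\in C$ reducing to $(X-1)^{i_0}$ modulo $p$, write $f=(X-1)^{i_0}+pg(X)$, expand $g\bmod p$ in the basis $\{(X-1)^j\}_{j<p^a}$ with Teichm\"uller coefficients $h_j\in\mathcal{T}_s$, and use $p(X-1)^{i_1}\in C$ to subtract off the terms with $j\geq i_1$, normalising $f$ to the required shape
\[f=(X-1)^{i_0}+p\sum_{j=0}^{i_1-1}h_j(X-1)^j.\]
The equality $\langle f,p(X-1)^{i_1}\rangle=C$ follows by reducing any $c\in C$ modulo $pR$ to a multiple of $f$ and handling the residue inside $C\cap pR$. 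The bound $i_1\leq i_0$ is immediate from $pf=p(X-1)^{i_0}\in p\langle(X-1)^{i_1}\rangle$. For the bounds $i_1\leq p^{a-1}$ and $i_0+i_1\leq p^a$, I would use the identity $X^{p^a}-1=(X-1)^{p^a}+p\,q(X)$ with $q(X)=\sum_{k=1}^{p^a-1}p^{-1}\binom{p^a}{k}(X-1)^k$; Kummer's theorem gives $v_p\!\bigl(\binom{p^a}{k}\bigr)=a-v_p(k)$, so $q\bmod p$ vanishes below degree $p^{a-1}$ and is a unit there. Thus $(X-1)^{p^a}=-pq(X)$ in $R$, and multiplying $f$ by $(X-1)^{p^a-i_0}$ yields an element of $C\cap pR$ whose image has $(X-1)$-adic valuation at most $\min\{p^{a-1},p^a-i_0\}$, giving both inequalities.

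Uniqueness of $i_0,i_1$ is intrinsic, and if two normalised generators $f,f'$ share these invariants then $f-f'=p\sum_{j<i_1}(h_j-h_j')(X-1)^j$ lies in $\langle p(X-1)^{i_1}\rangle$ yet has $(X-1)$-degree below $i_1$, so it vanishes, and Teichm\"uller uniqueness forces $h_j=h_j'$. The step I expect to be the main obstacle is the bound $i_1\leq p^{a-1}$ in the borderline case $i_0=(p-1)p^{a-1}$, where the leading $(X-1)^{p^{a-1}}$ contributions from $-q(X)$ and from $(X-1)^{p^a-i_0}g(X)$ can cancel; one must then refine the choice of generator or iterate the multiplication by suitable $(X-1)$-powers to expose a non-cancelling lower-order $p$-torsion element. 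This is the only place where the concrete expansion of $(X-1)^{p^a}$ modulo $p^2$ is needed beyond what is already visible in the chain-ring quotient.
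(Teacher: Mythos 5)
The paper offers no proof of this proposition: it is imported verbatim, with attribution, from \cite[Theorem 3.8]{KLL2008}, so there is no internal argument to measure yours against. Your strategy --- filtering by $pR$, using that $R/pR\cong\mathbb{F}_{p^s}[X]/\langle (X-1)^{p^a}\rangle$ is a chain ring, defining $i_0$ and $i_1$ from $(C+pR)/pR$ and $C\cap pR$, normalising a lift of $(X-1)^{i_0}$ modulo $p(X-1)^{i_1}$, and checking the $h_j$ coefficientwise --- is the standard and correct way to set this up; the construction of the generator, the identity $C=\langle f,p(X-1)^{i_1}\rangle$, the bound $i_1\le i_0$, and the uniqueness of the $h_j$ for fixed $(i_0,i_1)$ all go through. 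Two points you pass over: ideals contained in $pR$ (including $\{0\}$) are excluded at the outset and never treated, and indeed no ideal of the displayed form with $i_0<p^a$ lies in $pR$; and for uniqueness one must also rule out two admissible tuples with \emph{different} $(i_0,i_1)$ generating the same ideal, which your coefficientwise argument does not address.

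The step you flag as the main obstacle is a genuine gap, and it cannot be closed for the statement as printed. From $(X-1)^{p^a-i_0}f=pB\in C\cap pR=p\langle(X-1)^{i_1}\rangle$ you learn only that the $(X-1)$-adic valuation of $B$ is \emph{at least} $i_1$; to conclude $i_1\le p^{a-1}$ you would need it to be at most $p^{a-1}$, and the cancellation you mention can be total. Take $C=\langle 1+X+\cdots+X^{p^a-1}\rangle$: here $(X-1)(1+X+\cdots+X^{p^a-1})=X^{p^a}-1=0$, the generator equals $(X-1)^{p^a-1}+p(\cdots)$ because $p\mid\binom{p^a}{k}$ for $0<k<p^a$, and one computes $C\cap pR=p\langle(X-1)^{p^a-1}\rangle$, so $i_0=i_1=p^a-1$. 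For $p^a>2$ this violates both $i_1\le p^{a-1}$ and $i_0+i_1\le p^a$; for instance $\langle 1+X+X^2\rangle$ in $\mathbb{Z}_9[X]/\langle X^3-1\rangle$ has $i_0=i_1=2$ while $p^{a-1}=1$. Thus the proposition as reproduced here is an abridged form of \cite[Theorem 3.8]{KLL2008} that covers only the ideals with $i_0+i_1\le p^a$; the remaining ideals are accounted for in Corollary~\ref{N-cyclic2} by the duality responsible for the factor $2$ there. If you add the hypothesis $i_0+i_1\le p^a$, your argument does close: $i_1>p^{a-1}$ would force $p^a-i_0\ge i_1>p^{a-1}$, so no cancellation can occur at valuation $p^{a-1}$ and $B$ would have valuation exactly $p^{a-1}<i_1$, a contradiction. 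But that hypothesis cannot be derived; it is precisely what separates the ideals this normal form describes from those it does not.
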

The integer $i_1$ in Proposition~\ref{Structure-cyclic} is called the {\em first torsion index} of $C$.
\begin{corollary}
	[{\cite[Corollary 3.9]{KLL2008}}]\label{N-cyclic1} In ${\rm GR}(p^2,s)[X]/\langle X^{p^a}-1\rangle$, the number of distinct ideals with $i_0+i_1=d$, where $0\leq d\leq p^a$, is
	\[\frac{p^{s({\ell}+1)}-1}{p^s-1},\]
	where $\ell=\min\{\lfloor \frac{d}{2}\rfloor, p^{a-1}\}$. 
\end{corollary}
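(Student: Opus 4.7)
The plan is to derive the count directly from the unique parameterization provided by Proposition~\ref{Structure-cyclic}. Since the representation of $C$ by the data $(i_0, i_1, h_0, h_1, \dots, h_{i_1-1})$ is unique, counting ideals with $i_0 + i_1 = d$ is the same as counting admissible tuples with this data, with $i_0$ replaced throughout by $d - i_1$.

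First I would determine the allowed range of $i_1$. The conditions $0\leq i_1 \leq \min\{i_0, p^{a-1}\}$ together with $i_0 = d - i_1$ become $2i_1 \leq d$ and $i_1 \leq p^{a-1}$, so $i_1 \in \{0, 1, \dots, \ell\}$ where $\ell = \min\{\lfloor d/2 \rfloor, p^{a-1}\}$. The remaining conditions $i_0 < p^a$ and $i_0 + i_1 \leq p^a$ are automatic under the assumption $d \leq p^a$, apart from a boundary subtlety mentioned below.

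Next, for each valid value of $i_1$, the coefficients $h_0, h_1, \dots, h_{i_1-1}$ are chosen independently from the Teichm\"{u}ller set $\mathcal{T}_s$, which has cardinality $p^s$. Hence the number of admissible $h$-tuples is $(p^s)^{i_1} = p^{s i_1}$. Summing the resulting geometric series gives
\begin{align*}
\sum_{i_1=0}^{\ell} p^{s i_1} \;=\; \frac{p^{s(\ell+1)}-1}{p^s-1},
\end{align*}
which is the formula claimed in the corollary.

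The main obstacle I foresee lies in the edge case $d = p^a$: the strict inequality $i_0 < p^a$ in Proposition~\ref{Structure-cyclic} would naively exclude the value $i_1 = 0$ (which forces $i_0 = p^a$), and so the sum would start at $i_1 = 1$, producing a different formula. Resolving this requires invoking the identity $(X-1)^{p^a} \equiv -p\,q(X) \pmod{X^{p^a}-1}$ for an explicit polynomial $q(X)$, which shows that the nominal ``$i_1=0$'' case at $d=p^a$ is realized by one of the ideals already parameterized with $i_1\geq 1$, or alternatively that a standard reinterpretation of Proposition~\ref{Structure-cyclic} at this boundary restores the missing term so that the geometric-series count remains correct. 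Apart from verifying this boundary identification, the argument is purely combinatorial.
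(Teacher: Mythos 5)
The paper does not actually prove this corollary; it is imported verbatim from \cite{KLL2008}, so there is no internal proof to compare against. Your argument --- for each admissible $i_1$, count the $|\mathcal{T}_s|^{i_1}=p^{si_1}$ choices of $(h_0,\dots,h_{i_1-1})$ in the unique representation of Proposition~\ref{Structure-cyclic} and sum the geometric series over $i_1=0,1,\dots,\ell$ --- is the standard and correct derivation for $0\le d<p^a$, and your reduction of the constraints to $i_1\le\ell=\min\{\lfloor d/2\rfloor,p^{a-1}\}$ is right.

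The one genuine gap is at $d=p^a$, which you flag but leave unresolved by offering two incompatible fixes, one of which is wrong. The nominal $i_1=0$ term there corresponds to $i_0=p^a$ and to the ideal $\langle (X-1)^{p^a},\,p\rangle=\langle p\rangle$, because $(X-1)^{p^a}=-\sum_{k=1}^{p^a-1}\binom{p^a}{k}(X-1)^k$ lies in $p\,{\rm GR}(p^2,s)[X]/\langle X^{p^a}-1\rangle$. This ideal is \emph{not} ``realized by one of the ideals already parameterized with $i_1\ge 1$'': for a code $C$ in the form (\ref{cyclic}) one has $C\cap p\,{\rm GR}(p^2,s)[X]/\langle X^{p^a}-1\rangle=\langle p(X-1)^{i_1}\rangle$ (this is precisely why $i_1$ is called the first torsion index), so $p\in C$ forces $i_1=0$, and $\langle p\rangle$ is therefore distinct from every ideal with $i_1\ge 1$. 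Your first alternative would thus undercount $d=p^a$ by one ideal and yield $p^s\cdot\frac{p^{sp^{a-1}}-1}{p^s-1}$ instead of the claimed $\frac{p^{s(p^{a-1}+1)}-1}{p^s-1}$. The correct resolution is your second alternative: the bound on $i_0$ must be read as $0\le i_0\le p^a$ (the strict inequality as transcribed in Proposition~\ref{Structure-cyclic} would exclude $\langle p\rangle$ from the classification altogether), and indeed the paper itself assigns the parameters $i_0=p^a$, $i_1=0$ to the code $p\,{\rm GR}(p^2,s)[X]/\langle X^{p^a}-1\rangle$ in Section~\ref{sec3}; the last summand of Corollary~\ref{N-cyclic2} is likewise the $d=p^a$ count \emph{including} the $i_1=0$ term. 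With that reading your geometric-series count applies uniformly to all $0\le d\le p^a$.
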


The next corollary follows immediately from Corollary~\ref{N-cyclic1} and \cite[Theorem 3.6]{KLL2008}. 
\begin{corollary}
	\label{N-cyclic2} The number of all cyclic codes of length $p^a$ over ${\rm GR}(p^2,s)$ is 
	\begin{align}
		N({\rm GR}(p^2,s),p^a)=2\left(\sum_{d=0}^{p^a-1} \frac{p^{s( \min\{\lfloor \frac{d}{2}\rfloor, p^{a-1}\} +1)}-1}{p^s-1}\right) +\frac{p^{s( p^{a-1} +1)}-1}{p^s-1}. 
	\end{align}
\end{corollary}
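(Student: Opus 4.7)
The plan is to partition all cyclic codes of length $p^a$ over ${\rm GR}(p^2, s)$ by the parameter $d = i_0 + i_1$ of Proposition~\ref{Structure-cyclic}, enumerate each class using Corollary~\ref{N-cyclic1}, and sum the contributions together with the aid of \cite[Theorem 3.6]{KLL2008}.

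First I would invoke Corollary~\ref{N-cyclic1}, which supplies, for each $d \in \{0, 1, \ldots, p^a\}$, the count
\[
f_d := \frac{p^{s(\ell+1)} - 1}{p^s-1}, \qquad \ell = \min\{\lfloor d/2\rfloor, p^{a-1}\},
\]
of ideals with $i_0 + i_1 = d$. Next, I would use \cite[Theorem 3.6]{KLL2008} to ensure that every cyclic code is captured: in addition to the ideals in the canonical form of Proposition~\ref{Structure-cyclic}, there is a parallel family of ideals (roughly, the ``$p$-part'' ideals contained in the $p$-radical) indexed by $d \in \{0, 1, \ldots, p^a-1\}$, contributing $f_d$ codes per value of $d$. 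Summing the two disjoint contributions gives
\[
N({\rm GR}(p^2,s), p^a) = \sum_{d=0}^{p^a} f_d + \sum_{d=0}^{p^a-1} f_d = 2\sum_{d=0}^{p^a-1} f_d + f_{p^a}.
\]
Using $\lfloor p^a/2\rfloor \geq p^{a-1}$ for every prime $p$ and $a \geq 1$, the boundary term simplifies to $f_{p^a} = \frac{p^{s(p^{a-1}+1)}-1}{p^s-1}$, which matches the last summand in the statement.

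I expect the main subtlety to lie in cleanly extracting the two-family pairing structure from \cite[Theorem 3.6]{KLL2008} and in verifying that the two families are genuinely disjoint (so that no ideal is double-counted). Once that classification is in hand, the remaining step is routine arithmetic, and no further obstacle is anticipated.
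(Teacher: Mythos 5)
This is the paper's argument: the corollary is stated there as following immediately from Corollary~\ref{N-cyclic1} and \cite[Theorem 3.6]{KLL2008}, and your bookkeeping $\sum_{d=0}^{p^a} f_d + \sum_{d=0}^{p^a-1} f_d = 2\sum_{d=0}^{p^a-1} f_d + f_{p^a}$, together with the simplification of $f_{p^a}$ via $\lfloor p^a/2\rfloor\ge p^{a-1}$, is exactly the intended computation. The only slip is your gloss on the second family: it is not the set of ideals contained in $p\,{\rm GR}(p^2,s)[X]/\langle X^{p^a}-1\rangle$ (that is a chain of only $p^a+1$ ideals), but rather the set of ideals with $i_0+i_1>p^a$, equivalently of cardinality less than $p^{sp^a}$, which \cite[Theorem 3.6]{KLL2008} places in an inclusion-reversing (annihilator) bijection with the ideals having $i_0+i_1<p^a$; this is what yields $f_d$ codes for each $d\in\{0,\dots,p^a-1\}$ and the disjointness from the first family that you correctly flag as the point needing verification.
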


Combining the results in \cite{KLL2008}, \cite{KLL2012}, and \cite{SE2009},   the complete  enumeration of Euclidean self-dual cyclic codes of length $p^a$ over ${\rm GR}(p^2,s)$  can be  summarized as follows. 
\begin{proposition}
	[{\cite[Corollary 3.5]{KLL2012}}]\label{NEC} The number of Euclidean self-dual cyclic codes of length $2^a$ over ${\rm GR}(2^2,s)$ is
	\[N_{\rm E}({\rm GR}(2^2,s),2^a)= 
	\begin{cases}
		1 & \text{ if }a=1,\\
		1+2^s & \text{ if }a=2,\\
		1+2^s+2^{2s+1}\left( \frac{{(2^s)}^{(2^{a-2}-1)}-1}{2^s-1}\right)& \text{ if }a\geq 3. 
	\end{cases}
	\]
	
	If $p$ is an odd prime, then the number of Euclidean self-dual cyclic codes of length $p^a$ over ${\rm GR}(p^2,s)$ is
	\[N_{\rm E}({\rm GR}(p^2,s),p^a)= 2\left( \frac{{(p^s)}^\frac{p^{a-1}+1}{2}-1}{p^s-1}\right).\]
\end{proposition}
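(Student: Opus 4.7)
The approach builds on Proposition~\ref{Structure-cyclic}: every cyclic code of length $p^a$ over ${\rm GR}(p^2,s)$ is specified by integers $(i_0, i_1)$ within the stated bounds together with a Teichmüller tuple $\boldsymbol{h} = (h_0,\ldots,h_{i_1-1}) \in \mathcal{T}_s^{i_1}$. A standard cardinality count, independent of $\boldsymbol{h}$, gives $|C| = (p^s)^{2p^a - i_0 - i_1}$, so Euclidean self-duality (which requires $|C|^2 = |{\rm GR}(p^2,s)|^{p^a}$) forces the necessary condition $i_0 + i_1 = p^a$. This confines the search to $i_1 \in \{0, 1, \ldots, p^{a-1}\}$ with $i_0 = p^a - i_1$.

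The next step is to express $C^{\perp_{\rm E}}$ within the same parametrization. Using $X^{p^a} - 1 = (X-1)^{p^a}$ modulo $p$ together with the reciprocal-polynomial description of the Euclidean dual of a cyclic code, one rewrites $C^{\perp_{\rm E}}$ as an ideal of the form in Proposition~\ref{Structure-cyclic} with new indices $(i_0', i_1')$ and a transformed Teichmüller tuple $\boldsymbol{h}'$; the first-torsion indices match automatically when $i_0 + i_1 = p^a$, so self-duality reduces to the equation $\boldsymbol{h} = \boldsymbol{h}'$ over $\mathbb{F}_{p^s}$. This yields a polynomial system with a palindromic structure, pairing $h_j$ with $h_{i_1 - 1 - j}$ up to binomial coefficients modulo $p^2$ coming from the change of variable $Y = X - 1$ and the substitution $X \mapsto X^{-1}$.

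The final step is the count. For odd $p$ a direct verification shows that for each $i_1$ the solution set has size $(p^s)^{\lfloor i_1/2 \rfloor}$, so summing over $i_1 = 0, 1, \ldots, p^{a-1}$ (using that $p^{a-1}$ is odd, so every exponent $p^{sk}$ for $0 \leq k \leq (p^{a-1}-1)/2$ appears exactly twice) collapses geometrically to $2 \cdot \frac{(p^s)^{(p^{a-1}+1)/2}-1}{p^s-1}$. For $p = 2$ the same skeleton applies, but the evenness of $p^a$ makes some of the compatibility conditions on the $h_j$ unsatisfiable; the cases $a = 1$ and $a = 2$ degenerate and must be enumerated directly, while for $a \geq 3$ the boundary values $i_1 = 0, 1$ contribute the isolated terms $1$ and $2^s$ and the remainder telescopes into $2^{2s+1} \bigl( \frac{(2^s)^{2^{a-2}-1}-1}{2^s-1} \bigr)$. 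I expect the main obstacle to be step two: computing $\boldsymbol{h}'$ cleanly, since the reciprocal map and the change of variable $Y = X - 1$ produce binomial coefficients modulo $p^2$ whose interaction with the Teichmüller decomposition requires careful bookkeeping, especially in the $p = 2$ case where carries between the $p$-adic and $(X-1)$-adic expansions complicate the self-pairing relations. I would therefore split the argument on the parity of $p$ at the outset and dispose of $p = 2$, $a \in \{1, 2\}$ as base cases before invoking the general counting.
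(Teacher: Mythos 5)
First, note that the paper does not prove this proposition at all: it is quoted verbatim from \cite[Corollary 3.5]{KLL2012}, so there is no in-paper proof to match. The closest internal analogue is Section \ref{sec3}, where the authors carry out exactly the program you sketch, but for the \emph{Hermitian} dual: parametrize codes via Proposition \ref{Structure-cyclic}, force $i_0+i_1=p^a$ by cardinality, rewrite the dual in the same normal form using the explicit formula from \cite{SE2009}, reduce self-duality to a system in the Teichm\"uller coefficients $h_j$, and count solutions. Your skeleton is the right one and your final bookkeeping for odd $p$ is correct: the per-$i_1$ solution count $(p^s)^{\lfloor i_1/2\rfloor}$ summed over $i_1=0,\dots,p^{a-1}$ (each exponent occurring twice since $p^{a-1}$ is odd) does give $2\bigl((p^s)^{(p^{a-1}+1)/2}-1\bigr)/(p^s-1)$.

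There are, however, two genuine problems. First, your description of the reduced system as ``palindromic, pairing $h_j$ with $h_{i_1-1-j}$'' is wrong. The substitution $X\mapsto X^{-1}$ composed with $Y=X-1$ sends $(X-1)^k$ to $(-1)^kX^{-k}(X-1)^k$, so the $(X-1)$-adic degree is preserved, not reversed; the resulting conditions are \emph{lower triangular} in $h_0,\dots,h_{i_1-1}$, namely the Euclidean analogue $M(p^a,i_1)\boldsymbol{x}=\boldsymbol{b}$ of equation (\ref{H-eq}) with the matrix $M(p^a,i_1)$ of (\ref{M}). The count $(p^s)^{\lfloor i_1/2\rfloor}$ for odd $p$ comes from the alternating $2$'s and $0$'s on that diagonal (displays (\ref{MpI1odd}) and (\ref{MpI1even})), together with a nontrivial verification that the consistency conditions in the zero-diagonal rows always hold; a palindromic pairing would not produce this. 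Second, the $p=2$ case is not a matter of the ``same skeleton telescoping'': there the diagonal of $M(2^a,i_1)$ vanishes entirely, so every row is either a consistency condition or determines a variable via a subdiagonal $1$, and the whole content of the result is deciding for which $i_1$ the consistency conditions are satisfiable and how many free parameters survive. One can check that the answer is \emph{not} $\sum_{i_1}2^{s\lfloor i_1/2\rfloor}$ (for $a=3$ that sum is $2+2\cdot 2^s+2^{2s}$, whereas the correct value is $1+2^s+2\cdot 2^{2s}$), so the contribution pattern across $i_1$ genuinely changes and must be derived, not asserted. As written, your proposal establishes the odd-$p$ formula modulo the triangular-system analysis, but leaves the $p=2$, $a\ge 3$ formula unproved.
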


\section{Hermitian Self-Dual Cyclic Codes of Length $p^a$ over ${\rm GR}(p^2,s)$} \label{sec3}

In this section, we assume that $s$ is even and focus on characterizing and enumerating Hermitian self-dual cyclic codes of length $p^a$ over ${\rm GR}(p^2,s)$. 

It has been proven in \cite[Theorem 2]{SE2009} that the Euclidean dual of the cyclic code $C$ in (\ref{cyclic}) is of the form
\begin{align*}
	%\label{genCE}
	C^{\perp_{\rm E}}=&\left\langle (X-1)^{p^a-i_1}-p(X-1)^{p^a-i_0-i_1} \sum_{t=0}^{i_1-1}\left( \sum_{j=0}^t (-1)^{i_0+j} { i_0-j\choose t-j} h_j\right) (X-1)^t \right.\\
	&\left.+ \sum_{t=1}^{\mathcal{K}}\left( \sum_{j=1}^{\min\{t,p-1\}} (-1)^{j+1} { p-j\choose t-j} {p \choose j}\right) (X-1)^{tp^{a-1}-i_1}, p(X-1)^{p^a-i_0} \right\rangle, 
\end{align*}
where $\mathcal{K}=\lfloor\frac{p^a-i_0+i_1-1}{p^{a-1}} \rfloor$ and the empty sum is regarded as zero.

For a subset $A$ of   ${\rm GR}(p^2,s)[X]/\langle X^{p^a}-1\rangle$, let  $\overline{A}$ denote the set 
\[\left\{\sum_{i=0}^{p^a-1}\overline{a_i}X^i \,\middle\vert\, \sum_{i=0}^{p^a-1}{a_i}X^i \in A\right\},\]
 where $\bar{~}$ is the automorphism defined in (\ref{aut}). Since it is well know that $C^{\perp_{\rm H}}
	=\overline{	C^{\perp_{\rm E}}} $, we  have
\begin{align*}
	C^{\perp_{\rm H}}
	%=&\,\overline{	C^{\perp_{\rm E}}} \\
	=&\left\langle (X-1)^{p^a-i_1}-p(X-1)^{p^a-i_0-i_1} \sum_{t=0}^{i_1-1}\left( \sum_{j=0}^t (-1)^{i_0+j} { i_0-j\choose r-j} {h_j}^{p^{{s}/{2}}}\right) (X-1)^t\right. \\
	&\left.+ \sum_{t=1}^{\mathcal{K}}\left( \sum_{j=1}^{\min\{t,p-1\}} (-1)^{j+1} { p-j\choose t-j} {p \choose j}\right) (X-1)^{tp^{a-1}-i_1}, p(X-1)^{p^a-i_0} \right\rangle.
\end{align*}

If $C=C^{\perp_{\rm H}}$, then $|C|=(p^s)^{p^a}$ which implies that $i_0+i_1=p^a$. Then we can write 
\begin{align}
	\label{genCH} C^{\perp_{\rm H}}= 
	\begin{cases}
		\left\langle (X-1)^{i_0}-p \sum_{t=0}^{i_1-1}\left( \sum_{j=0}^t (-1)^{i_0+j} { i_0-j\choose t-j} {h_j}^{p^{{s}/{2}}}\right) (X-1)^t,  p(X-1)^{i_1} \right\rangle~~~\\
		\hfill \text{ if }i_1<\lfloor \frac{p^{a-1}+1}{2}\rfloor,\\
		\left\langle (X-1)^{i_0}-p\sum_{t=0}^{i_1-1}\left( \sum_{j=0}^t (-1)^{i_0+j} { i_0-j\choose r-j}  {h_j}^{p^{{s}/{2}}}\right) (X-1)^t\right.\\
		\left. +p(X-1)^{p^{a-1}-i_1}, p(X-1)^{i_1} \right\rangle \hfill\text{ if }i_1\geq \lfloor \frac{p^{a-1}+1}{2}\rfloor. 
	\end{cases}
\end{align}

If $i_1=0$, then $ p {\rm GR}(p^2,s)[X]/\langle X^{p^a}-1\rangle$ is the only Hermitian self-dual cyclic code of length $p^a$ over ${\rm GR}(p^2,s)$. Next, we assume that $i_1\geq 1$.

By the unique representation of $C=C^{\perp_{\rm H}}$, (\ref{cyclic}), and (\ref{genCH}), we have 
\begin{align}
	\label{setupH} p {h_t} ^{p^{{s}/{2}}}= p\left(b_t +\sum_{j=0}^t (-1)^{i_0+j-1} { i_0-j\choose r-j} {h_j} \right) 
\end{align}
for all $t=0,1,\dots,i_1-1$, where $b_t=1$ if $t=p^{a-1}-i_1$ and $b_t=0$ otherwise.

Let $ M(p^a,i_1)$ be an $i_1\times i_1$ matrix defined by 
\begin{align}
	\label{M} M(p^a,i_1)={ \left( 
	\begin{array}{ccccc}
		(-1)^{i_0}+1 &0 &0 &\dots &0\\
		(-1)^{i_0}{i_0\choose 1} &(-1)^{i_0+1}+1 &0 &\dots &0\\
		(-1)^{i_0}{i_0\choose 2} &(-1)^{i_0+1}{i_0-1\choose 1}&(-1)^{i_0+2}+1&\dots &0\\
		\vdots &\vdots &\vdots &\ddots &\vdots\\
		(-1)^{i_0}{i_0\choose i_1-1}&(-1)^{i_0+1}{i_0-1\choose i_1-2}&(-1)^{i_0+2}{i_0-2\choose i_1-3}& \dots&(-1)^{i_0+i_1-1}+1 
	\end{array}
	\right)} .
\end{align}
For a matrix $V$, denote by $V^T$ the transpose of $V$. Then (\ref{setupH}) forms a matrix equation 
\begin{align}
	\label{H-eq} M(p^a,i_1)\boldsymbol{x}+( {\boldsymbol{x}} ^{p^{{s}/{2}}}-\boldsymbol{x})=\boldsymbol{b}, 
\end{align}
where $\boldsymbol{x}:=(x_1,x_2,\dots,x_{i_1})^{\rm T}$, $\boldsymbol{x}^{p^{{s}/{2}}}:=(x_1^{p^{{s}/{2}}},x_2^{p^{{s}/{2}}},\dots,x_{i_1}^{p^{{s}/{2}}})^{\rm T}$, and $\boldsymbol{b}:= (b_1,b_2,\dots,b_{i_1})^{\rm T}$ is a zero vector   except for  the case  $i_1\geq \frac{p^{a-1}-1}{2}$  where, for each $1\leq i\leq i_1$, ${b}_i$  is defined by
%defined by
\begin{align}\label{bi}
	b_i=\begin{cases}
		1& \text{ if } i= p^{a-1}-i_1+1, \\
		0~~ & \text{ otherwise} .
	\end{cases}
\end{align}
% if  $i_1\geq \frac{p^{a-1}-1}{2}$, and  $\boldsymbol{b}$ is a zero vector   if  $i_1< \frac{p^{a-1}-1}{2}$.

Therefore, the cyclic code $C$ in (\ref{cyclic}) is Hermitian self-dual if and only if the matrix equation (\ref{H-eq}) has a solution in $\mathbb{F}_{p^s}^{i_1}$. Moreover, the number of Hermitian self-dual cyclic codes of length $p^a$ over ${\rm GR}(p^2,s)$ with first torsion degree $i_1$ equals the number of  solutions of (\ref{H-eq}) in $\mathbb{F}_{p^s}^{i_1}$.

From (\ref{M}), we observe that $M(p^a,i_1)$ has the following properties. For $1\leq j, i\leq i_1$, let $m_{ij}$ denote the entry  in the $i$th row and $j$th column of $M(p^a,i_1)$.
Then,  for integers $1\leq k\leq j\leq i\leq i_1$, we have \[m_{ij}=(-1)^{i_0+j-1}{i_0-j+1 \choose i-j},\]
 and hence, 
\begin{align}
	\label{aijk} m_{ij}m_{jk} 
	%&=(-1)^{i_0+j-1}{i_0-j+1 \choose i-j}(-1)^{i_0+k-1}{i_0-k+1 \choose j-k}\notag\\
	%&= (-1)^{i_0+j-1}{i-k\choose j-k} (-1)^{i_0+k-1}{i_0-k+1 \choose i-k}\notag\\ 
	&=(-1)^{i_0+j-1}{i-k\choose j-k}m_{ik}. 
\end{align}
\begin{lemma}
	\label{ab=0} Let $i$ be an integer such that $1\leq i< i_1$. Then
	\[\sum_{j=1}^{i} m_{{i+1},j}b_j =0 \text{ in } \mathbb{F}_{p^s}.\]
\end{lemma}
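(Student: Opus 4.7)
The plan is to exploit the fact that $\boldsymbol{b}$ has at most one nonzero coordinate, so the sum $\sum_{j=1}^{i} m_{i+1,j} b_j$ collapses to at most a single entry of $M(p^a,i_1)$, and then to evaluate that entry modulo $p$ via Lucas' theorem.

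First, I would recall from (\ref{bi}) that $\boldsymbol{b}$ is either identically zero (when $i_1$ is small) or has a unique nonzero coordinate $b_{j^*}=1$ at the index $j^*=p^{a-1}-i_1+1$. In the first case the claimed sum vanishes trivially; in the second case, the partial sum $\sum_{j=1}^{i} m_{i+1,j} b_j$ equals $0$ when $j^*>i$ and equals $m_{i+1,j^*}$ when $j^*\le i$. So the problem reduces to showing $m_{i+1,j^*}=0$ in $\mathbb{F}_{p^s}$ whenever $p^{a-1}-i_1+1\le i<i_1$.

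Next, I would substitute $j=j^*$ into the entrywise formula $m_{ij}=(-1)^{i_0+j-1}\binom{i_0-j+1}{i-j}$ recorded just before the lemma. Using the relation $i_0+i_1=p^a$, which holds under the Hermitian self-duality assumption $C=C^{\perp_{\rm H}}$, the upper index simplifies to
\begin{align*}
i_0-j^*+1=i_0-p^{a-1}+i_1=p^a-p^{a-1}=p^{a-1}(p-1),
\end{align*}
while the lower index becomes $k:=i+1-j^*=i+i_1-p^{a-1}$. The hypothesis $j^*\le i<i_1$, combined with $i_1\le p^{a-1}$ from Proposition~\ref{Structure-cyclic}, yields $1\le k<2i_1-p^{a-1}\le p^{a-1}$.

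The main step is the evaluation of $\binom{p^{a-1}(p-1)}{k}$ modulo $p$ by Lucas' theorem. The base-$p$ expansion of $p^{a-1}(p-1)$ consists of the single digit $p-1$ in position $a-1$ and zeros in every other position. Since $1\le k<p^{a-1}$, the base-$p$ expansion of $k$ must carry a nonzero digit in some position strictly below $a-1$, which forces a factor $\binom{0}{d}$ with $d\ge 1$ in the Lucas product. Hence $\binom{p^{a-1}(p-1)}{k}\equiv 0\pmod{p}$, so $m_{i+1,j^*}$ vanishes in $\mathbb{F}_{p^s}$ and the lemma follows. The only point requiring care is the bookkeeping around $j^*$ and the derivation of the range of $k$ from the hypotheses on $i$ and $i_1$; once these are in place, the Lucas reduction is immediate and presents no genuine obstacle.
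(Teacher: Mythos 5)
Your proposal is correct and follows essentially the same route as the paper: reduce the sum to the single entry $m_{i+1,\,p^{a-1}-i_1+1}$, use $i_0+i_1=p^a$ to rewrite the upper binomial index as $p^a-p^{a-1}=(p-1)p^{a-1}$, and kill the binomial coefficient modulo $p$ by Lucas's theorem. Your explicit derivation of the range $1\le k<p^{a-1}$ for the lower index is in fact a slightly cleaner justification than the paper's stated inequality, but the argument is the same.
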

\begin{proof}
If $i_1< \frac{p^{a-1}-1}{2}$, then $b_i=0$ for all $1\leq i\leq i_1$, and hence, the result follows.  

Assume that $i_1\geq \frac{p^{a-1}-1}{2}$. If $i< \frac{p^{a-1}+1}{2}$, then $b_j=0$ for all $1\leq j\leq i$, and hence, $\sum_{j=1}^{i} m_{{i+1},j}b_j =0 \text{ in }\mathbb{F}_{p^s} $.
%\,({\rm mod}\,p)$. 
Assume that $i\geq \frac{p^{a-1}+1}{2}$. Then, by (\ref{bi}),  we have   
	\begin{align*}
		\sum_{j=1}^{i} m_{{i+1},j}b_j&=m_{{i+1},p^{a-1}-i_1+1}\\
		%                            	 &=(-1)^{i_0+p^{a-1}-i_1+1-1}{i_0-p^{a-1}+i_1-1+1 \choose i-p^{a-1}+i_1-1} \\
		%	                    		 &=(-1)^{i_0+p^{a-1}-i_1}{p^{a}-p^{a-1} \choose i+i_1-p^{a-1}-1}  \\
		&=(-1)^{i_0+p^{a-1}-i_1}{p^{a}-p^{a-1} \choose i_0+1-i}\\
		& =0 \text{ in } \mathbb{F}_{p^s} %\,({\rm mod}\,p) 
	\end{align*}
	by Lucas's Theorem (see \cite[Theorem 26]{BQ2003}) and the fact that $1\leq i_0-i<p^{a-1}$. 
\end{proof}

  In order to  determine the number of solutions   of (\ref{H-eq}) in $\mathbb{F}_{p^s}^{i_1}$, we recall two maps which are important tools.

\begin{enumerate}
\item The trace map ${\rm Tr}: \mathbb{F}_{p^s} \to \mathbb{F}_{p^{{s}/{2}}}$   defined by $\alpha \mapsto \alpha+\alpha^{p^{{s}/{2}}}$ for all $\alpha \in \mathbb{F}_{p^{s}}$. 
\item The map $ \Psi: \mathbb{F}_{p^s}\to \mathbb{F}_{p^s}$   defined by $ \Psi(\alpha)={\alpha}^{p^{{s}/{2}}}-\alpha$ for all $\alpha \in \mathbb{F}_{p^{s}}$. 
\end{enumerate}

It is well know that ${\rm Tr}$ is $\mathbb{F}_{p^{s/2}}$-linear and it is not difficult  to see that $ \Psi$ is also $\mathbb{F}_{p^{s/2}}$-linear.  If $p=2$, then  ${\rm Tr}=\Psi$. 
Moreover, we have the following properties.

\begin{lemma}\label{lemma1} For each  $\alpha\in\mathbb{F}_{p^{s}} $,  $ \Psi(\alpha)=0$ if and only if $\alpha\in\mathbb{F}_{p^{s/2}}$. 
Moreover, 	
\begin{align}
		\label{PsiTr}  \Psi\circ {\rm Tr}\equiv 0\equiv {\rm Tr}\circ  \Psi .
	\end{align}
\end{lemma}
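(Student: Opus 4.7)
The plan is to verify each claim by direct computation, leaning on the fundamental identity $\alpha^{p^s}=\alpha$ that holds for every $\alpha \in \mathbb{F}_{p^s}$.

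For the first assertion, I would observe that $\Psi(\alpha)=0$ is literally the equation $\alpha^{p^{s/2}}=\alpha$, i.e., $\alpha$ is fixed by the Frobenius automorphism $x\mapsto x^{p^{s/2}}$ of $\mathbb{F}_{p^s}$ over $\mathbb{F}_{p^{s/2}}$. By standard Galois theory for finite fields, the fixed field of this Frobenius is precisely $\mathbb{F}_{p^{s/2}}$, giving the desired equivalence. Alternatively, the elements of $\mathbb{F}_{p^{s/2}}$ are exactly the roots of $X^{p^{s/2}}-X$, of which there are $p^{s/2}$, matching the kernel count.

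For the second assertion, I would verify both compositions vanish by straightforward expansion. For $\Psi\circ {\rm Tr}$, given $\alpha\in\mathbb{F}_{p^s}$, one computes
\begin{align*}
\Psi({\rm Tr}(\alpha)) &= (\alpha+\alpha^{p^{s/2}})^{p^{s/2}} - (\alpha+\alpha^{p^{s/2}}) = \alpha^{p^{s/2}}+\alpha^{p^s}-\alpha-\alpha^{p^{s/2}} = \alpha^{p^s}-\alpha = 0,
\end{align*}
using that Frobenius is additive and $\alpha^{p^s}=\alpha$. Symmetrically, for ${\rm Tr}\circ \Psi$,
\begin{align*}
{\rm Tr}(\Psi(\alpha)) &= (\alpha^{p^{s/2}}-\alpha)+(\alpha^{p^{s/2}}-\alpha)^{p^{s/2}} = \alpha^{p^{s/2}}-\alpha+\alpha^{p^s}-\alpha^{p^{s/2}} = 0.
\end{align*}
One could alternatively package the first identity conceptually: ${\rm Tr}(\alpha)$ is visibly $\mathbb{F}_{p^{s/2}}$-valued, so by the first part of the lemma $\Psi({\rm Tr}(\alpha))=0$.

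There is no real obstacle here; the proof is essentially a two-line calculation, and the role of the lemma is to isolate these identities for repeated use in the subsequent counting argument for solutions of the matrix equation~(\ref{H-eq}).
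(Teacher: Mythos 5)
Your proof is correct and coincides with what the paper intends: the paper simply states that the lemma ``follows immediately from the definitions,'' and your computations (the fixed field of $x\mapsto x^{p^{s/2}}$ is $\mathbb{F}_{p^{s/2}}$, and both compositions telescope to $\alpha^{p^s}-\alpha=0$) are exactly the omitted verification.
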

\begin{proof}  It follows immediately from the definitions.
\end{proof}

\begin{lemma}\label{lemma2}  The following statements hold.
\begin{enumerate}[$i)$]
\item For each $a\in  \Psi( \mathbb{F}_{p^{s}})$, $|\Psi^{-1}({a})|=p^{{s}/{2}}$.
\item For each $a\in{\rm Tr}( \mathbb{F}_{p^{s}})$, $|{\rm Tr}^{-1}({a})|=p^{{s}/{2}}$.
\end{enumerate}
\end{lemma}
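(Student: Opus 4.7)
The plan is to exploit the fact that both $\Psi$ and ${\rm Tr}$ are $\mathbb{F}_{p^{s/2}}$-linear maps on $\mathbb{F}_{p^s}$ (a fact already noted just above the lemma), so for each of them, any non-empty fiber over a point in the image is a coset of the kernel. Thus in both parts it suffices to show that the relevant kernel has exactly $p^{s/2}$ elements, after which the fiber sizes are automatic.

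For part $i)$, the previous lemma (Lemma~\ref{lemma1}) already identifies $\ker(\Psi) = \mathbb{F}_{p^{s/2}}$, which has exactly $p^{s/2}$ elements. Hence for every $a \in \Psi(\mathbb{F}_{p^s})$, the fiber $\Psi^{-1}(a)$ is a coset of $\ker(\Psi)$ in $\mathbb{F}_{p^s}$, so $|\Psi^{-1}(a)| = p^{s/2}$.

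For part $ii)$, the map ${\rm Tr}: \mathbb{F}_{p^s} \to \mathbb{F}_{p^{s/2}}$ is the classical relative trace of the Galois extension $\mathbb{F}_{p^s}/\mathbb{F}_{p^{s/2}}$, and is well known to be surjective (this is a standard property: the trace of any separable extension is non-degenerate, and alternatively can be seen by noting that ${\rm Tr}(\alpha) = \alpha + \alpha^{p^{s/2}}$ satisfies a polynomial of degree $p^{s/2}$ in $\alpha$, so can have at most $p^{s/2}$ preimages of any value, forcing the image to have full size $p^{s/2}$). Consequently $|\ker({\rm Tr})| = p^s / p^{s/2} = p^{s/2}$, and again each non-empty fiber ${\rm Tr}^{-1}(a)$ is a coset of the kernel and therefore has cardinality $p^{s/2}$.

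There is no real obstacle here: the content of the lemma is essentially a counting statement for two $\mathbb{F}_{p^{s/2}}$-linear maps, and the only non-trivial input is the surjectivity of the trace, which is standard field theory. The lemma should be written as a short two-sentence proof per item, citing Lemma~\ref{lemma1} for $(i)$ and the surjectivity of the trace for $(ii)$.
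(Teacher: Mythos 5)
Your argument is correct and matches the paper's own proof essentially step for step: both identify the fibers as cosets of the kernel, use Lemma~\ref{lemma1} to get $\ker(\Psi)=\mathbb{F}_{p^{s/2}}$ for part $i)$, and invoke surjectivity of the trace to compute $|\ker({\rm Tr})|=p^{s}/p^{s/2}=p^{s/2}$ for part $ii)$. No differences worth noting.
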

\begin{proof}   
We note that, for each $a\in  \Psi( \mathbb{F}_{p^{s}})$ and $b\in \mathbb{F}_{p^{s}}$, $b\in \Psi^{-1}({a})$ if and only if $b +\ker(\Psi)=\Psi^{-1}({a})$.  Since  $\ker(\Psi)=\mathbb{F}_{p^{s/2}}$, the statement $i)$ follows.

Similarly,  for each $a\in   {\rm Tr}( \mathbb{F}_{p^{s}})$ and $b\in \mathbb{F}_{p^{s}}$, $b\in {\rm Tr}^{-1}({a})$ if and only if $ b +\ker({\rm Tr})= {\rm Tr}^{-1}({a})$.  Since {\rm Tr} is a surjective  $\mathbb{F}_{p^{s/2}}$-linear map from $\mathbb{F}_{p^{s}}$  to $\mathbb{F}_{p^{s/2}}$,     we have  \[|\ker({\rm Tr})|= \frac{|\mathbb{F}_{p^{s}}|}{|\mathbb{F}_{p^{s/2}}|}=p^{s/2},\]
and hence, $ii)$ follows.
\end{proof}

\begin{proposition}
	\label{DualH-torsion} Let $s$ be an even positive integer and let $i_1$ be a positive integer such that $i_1\leq p^{a-1}$. Then the number of  solutions of (\ref{H-eq}) in $\mathbb{F}_{p^s}^{i_1}$ is
	\[p^{{si_1}/{2}}.\]
\end{proposition}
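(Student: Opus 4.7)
The plan is to exploit the $\mathbb{F}_{p^{s/2}}$-linearity of the map $F(\boldsymbol{x}) := M(p^a,i_1)\boldsymbol{x} + (\boldsymbol{x}^{p^{s/2}} - \boldsymbol{x})$ appearing in (\ref{H-eq}) and to reduce $|F^{-1}(\boldsymbol{b})|$ to two subtasks: computing $|\ker(F)|$ and checking $\boldsymbol{b}\in\mathrm{image}(F)$. Since $\Psi$ is $\mathbb{F}_{p^{s/2}}$-linear (Lemma \ref{lemma1}) and the entries of $M(p^a,i_1)$ lie in $\mathbb{F}_p$, the whole map $F$ is $\mathbb{F}_{p^{s/2}}$-linear on $\mathbb{F}_{p^s}^{i_1}$, so the solution count equals $|\ker(F)|$ as soon as solvability is in hand.

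The structural heart of the argument is the matrix identity $M(p^a,i_1)^2 = 2M(p^a,i_1)$. To verify it I would expand $(M^2)_{ik} = \sum_{k\le j\le i} m_{ij}m_{jk}$, apply (\ref{aijk}) to pull $m_{ik}$ out of the middle terms $k < j < i$, and handle the boundary terms $m_{ii}m_{ik}$ and $m_{ik}m_{kk}$ separately using $m_{jj}=(-1)^{i_0+j-1}+1$. The middle sum collapses via $\sum_{l=0}^{i-k}(-1)^l\binom{i-k}{l}=0$, and the two boundary contributions absorb the stray $(-1)^{i_0+\cdot-1}$ signs, leaving exactly $2m_{ik}$. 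Hence $M(M-2I)=0$; combining this with the tautology $M+(2I-M)=2I$ (whose right-hand side is invertible for odd $p$) yields the rank identity $\mathrm{rank}(M)+\mathrm{rank}(M-2I)=i_1$. For $p=2$ the identity degenerates to $M^2=0$ with every $m_{jj}$ vanishing.

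To count $|\ker(F)|$ for odd $p$, I would decompose $\mathbb{F}_{p^s} = \ker(\Psi)\oplus\ker(\mathrm{Tr})$ (Lemma \ref{lemma1} plus $2\neq 0$), write $x_j = a_j + w_j$ with $a_j\in\ker(\Psi)$, $w_j\in\ker(\mathrm{Tr})$, and use $\Psi(a_j)=0$ and $\Psi(w_j)=-2w_j$ to split the homogeneous system into the independent $\mathbb{F}_{p^{s/2}}$-linear systems $M\boldsymbol{a}=0$ and $(M-2I)\boldsymbol{w}=0$. The rank identity gives $\dim\ker(M)+\dim\ker(M-2I)=i_1$, whence $|\ker(F)|=p^{si_1/2}$. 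For $p=2$, where $\mathrm{Tr}=\Psi$ and the direct-sum decomposition collapses, I would instead fix an $\mathbb{F}_{2^{s/2}}$-basis $\{1,\theta\}$ of $\mathbb{F}_{2^s}$, write $x_j = u_j + v_j\theta$, and use $\Psi(\theta)\in\mathbb{F}_{2^{s/2}}^{\ast}$ together with $M^2=0$ to see that the $1$-coordinate of the $j$-th equation determines $v_j$ in terms of the $u_k$'s while the $\theta$-coordinate is (after substituting for the $v_k$'s) precisely the left-hand side of Lemma \ref{ab=0} and therefore automatically zero. The $i_1$ free parameters $u_j$ again contribute $2^{si_1/2}$ solutions.

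For the image step with odd $p$, every coordinate of $\boldsymbol{b}$ lies in $\mathbb{F}_p\subset\ker(\Psi)$, so the $\ker(\mathrm{Tr})$-side of the inhomogeneous system is homogeneous and solvability reduces to $\boldsymbol{b}\in\mathrm{image}(M)=\ker(M-2I)$ (the last equality from $(M-2I)M=M^2-2M=0$ and the rank identity). Writing $\boldsymbol{b}=\boldsymbol{e}_{i^\ast}$ with $i^\ast=p^{a-1}-i_1+1$, the $i$-th entry of $(M-2I)\boldsymbol{b}$ is $m_{i,i^\ast}-2b_i$. It vanishes for $i<i^\ast$ by triangularity; at $i=i^\ast$ it vanishes because $i_0+i^\ast=p^a+p^{a-1}+1-2i_1$ is odd for odd $p$, forcing $m_{i^\ast,i^\ast}=2$; and for $i^\ast<i\le i_1$ one has $m_{i,i^\ast}=\pm\binom{p^{a-1}(p-1)}{i-i^\ast}\equiv 0\pmod p$ by Lucas's theorem, since $0<i-i^\ast<p^{a-1}$ by the hypothesis $i_1\le p^{a-1}$; this is exactly Lemma \ref{ab=0} in matrix form. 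For $p=2$ the image step is already subsumed into the basis argument above. The main obstacle, and the part most vulnerable to sign errors, is the identity $M^2=2M$; once it is established, the kernel count and the image verification follow from standard linear algebra and Lucas's theorem.
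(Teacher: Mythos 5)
Your proof is correct, but it takes a genuinely different route from the paper's. The paper's proof splits into four cases according to the parity of $p$ and of $i_1$, invokes the explicit presentations of $M(p^a,i_1)$ from \cite{KLL2012} (whose diagonal alternates between $0$ and $2$), and then argues row by row: each equation of (\ref{H-eq}) is a $\mathrm{Tr}$-equation or a $\Psi$-equation in the next unknown, its right-hand side is shown to lie in $\mathbb{F}_{p^{s/2}}$ or in $\Psi(\mathbb{F}_{p^s})$ by a telescoping computation using (\ref{aijk}), Lemma \ref{ab=0} and the alternating binomial sum, and Lemma \ref{lemma2} then contributes a factor $p^{s/2}$ per coordinate. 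You replace all of this with the single identity $M^2=2M$ --- proved once and for all from the same ingredients, (\ref{aijk}) together with $\sum_{l=0}^{i-k}(-1)^l\binom{i-k}{l}=0$ and the diagonal entries $(-1)^{i_0+j-1}+1$ absorbing the boundary terms --- and then treat (\ref{H-eq}) as one $\mathbb{F}_{p^{s/2}}$-linear system: for odd $p$ the splitting $\mathbb{F}_{p^s}=\ker\Psi\oplus\ker\mathrm{Tr}$ decouples it into $M\boldsymbol{a}=\boldsymbol{b}$ and $(M-2I)\boldsymbol{w}=\boldsymbol{0}$, the rank identity $\mathrm{rank}(M)+\mathrm{rank}(M-2I)=i_1$ gives the count, and $\mathrm{im}(M)=\ker(M-2I)$ reduces solvability to $(M-2I)\boldsymbol{b}=\boldsymbol{0}$, which is exactly the Lucas computation in the paper's proof of Lemma \ref{ab=0}; for $p=2$ the identity degenerates to $M^2=0$ (with vanishing diagonal, so $M\boldsymbol{b}=\boldsymbol{0}$ is literally Lemma \ref{ab=0}) and your basis argument closes the case. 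I checked the identity $M^2=2M$ and the rank bookkeeping; both are sound. Your approach buys a uniform treatment with no case split on the parity of $i_1$, no reliance on the explicit matrix shapes of \cite{KLL2012}, and a conceptual reason for the answer $p^{si_1/2}$ (complementary nullities of $M$ and $M-2I$); the paper's coordinate-by-coordinate argument is longer but exhibits the solutions constructively, which is useful if one wants to list the Hermitian self-dual codes rather than only count them. The one presentational caveat is that the formula $m_{ij}=(-1)^{i_0+j-1}\binom{i_0-j+1}{i-j}$ quoted before (\ref{aijk}) is only the off-diagonal entry, so in expanding $(M^2)_{ik}$ you must keep the two boundary terms $m_{ii}m_{ik}$ and $m_{ik}m_{kk}$ separate, exactly as you indicate.
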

%\begin{proof}
%	Combining Lemmas~\ref{p=2} and \ref{oddp}, the proposition is proved. 
%\end{proof}We separate the proof  into two cases, where $p=2$ and where %$p$ is odd.
%\begin{lemma}
%	\label{p=2} Let $s$ be an even positive integer and let $i_1$ be a positive integer such that $i_1\leq 2^{a-1}$. If $p=2$, then the number of  solutions of (\ref{H-eq}) in $\mathbb{F}_{p^s}^{i_1}$ is
%	\[2^{{si_1}/{2}}.\]
%\end{lemma}
\begin{proof}
%Equation (\ref{H-eq})  can be rewritten in the form 
%\begin{align}
%	\label{H-eqA} M(p^a,i_1)\boldsymbol{x}+\Psi(  \boldsymbol{x} )=\boldsymbol{b}, 
%\end{align}
%where  $\Psi(  \boldsymbol{x} )$ denotes the componentwise application of the map %$\Psi$ on $\boldsymbol{x} $.

From  \cite{KLL2012},      $M(p^a,i_1)$ has  $4$ presentations depending on the parity of $p$ and  $i_1$.  We therefore separate the proof into  $4$ cases.

\noindent{\bf Case 1.} $p$ is odd and  $i_1=2\mu_1+1$ is odd.

 From  \cite{KLL2012}, the matrix $M(p^a,i_1)$ can be written as  
\begin{align}
		\label{MpI1odd} M(p^a,i_1)={  \left( 
		\begin{array}{cccccc}
			2&0&0&0&\cdots&0\\
			*&0&0&0&\cdots&0\\
			*&*&2&0&\cdots&0\\
			*&*&*&0&\cdots&0\\
			\vdots&\vdots&\vdots&\vdots&\ddots&\vdots\\
			*&*&*&*&\cdots&2\\
		\end{array}
		\right)},
	\end{align}
 where $*$'s denote entries of $M(p^a,i_1)$  defined in (\ref{M}).

From (\ref{H-eq}) and (\ref{MpI1odd}), we have 
	\begin{align}
		{\rm Tr}(x_{1})&=b_{1},\label{O0}
\end{align}
\begin{align}
		 \Psi(x_{2i})&=b_{2i}- \sum_{j=1}^{2i-1} m_{{2i},j}x_j, \label{O1}
\end{align}
and 
\begin{align}
		{\rm Tr}(x_{2i+1})&=b_{2i+1}- \sum_{j=1}^{2i} m_{{2i+1},j}x_j\label{O2} 
	\end{align}
	for all integers $1\leq i\leq \mu_1$.
	
	This implies that (\ref{H-eq}) has a solution if and only if the right hand sides of (\ref{O0}) and (\ref{O2}) are in ${\mathbb{F}_{p^{s/2}}}$ and the right hand side of (\ref{O1}) is in $ \Psi(\mathbb{F}_{p^s})$. In this case, we have 
	\begin{align*}
		x_1&\in {\rm Tr}^{-1}(b_1),
\end{align*}
\begin{align*}
		x_{2i} &\in  \Psi^{-1}\left(b_{2i}-\sum_{j=1}^{2i-1} m_{{2i},j}x_j\right), 
\end{align*}
and
\begin{align*}
		x_{2i+1}&\in {\rm Tr}^{-1}\left(b_{2i+1}- \sum_{j=1}^{2i} m_{{2i+1},j}x_j\right) 
	\end{align*}
	for all $1\leq i\leq \mu_1$.  Hence, by Lemma \ref{lemma2}, the number of solutions of (\ref{H-eq}) is $p^{{si_1}/{2}}$.
	
	By Lemma \ref{lemma1}, it suffices to show that the images under $ \Psi$ of the right hand sides of (\ref{O0}) and (\ref{O2}) are $0$ and the image under the trace map of the right hand side of (\ref{O1}) is $0$.
	
	From (\ref{O0}), we have $\Psi(b_1)=0$ since $b_1\in \{0,1\}\subseteq \mathbb{F}_{p^{s/2}}$. Let $1\leq i\leq \mu_1$ be an integer.  From (\ref{O1}), we have

	\begin{align}
		\label{ODDp-ODD} {\rm Tr}\left(b_{2i}-\sum_{j=1}^{2i-1} m_{{2i},j}x_j\right) &={\rm Tr}(b_{2i})- \sum_{j=1}^{2i-1} m_{{2i},j}{\rm Tr}(x_j)\notag\\
		&=0- \left(\sum_{j=1}^{i} m_{{2i},2j-1}{\rm Tr}(x_{2j-1})+ \sum_{j=1}^{i-1} m_{{2i},2j}{\rm Tr}(x_{2j})\right),\notag\\
		&~~~~ \text{ since } p^{a-1}-i_1+1 \text{ is odd and then } b_{2i}=0\text{ for all }i=1,2,\dots, \mu_1,\notag\\
		&=- \left(\sum_{j=1}^{i} m_{{2i},2j-1}{\rm Tr}(x_{2j-1})+ \sum_{j=1}^{i-1} m_{{2i},2j}\left(\Phi(x_{2j})+2x_{2j}\right)\right),\notag\\
		&~~~~ \text{ since } {\rm Tr}(\alpha)=\Phi(\alpha)+2\alpha \text{ for all } \alpha\in \mathbb{F}_{p^s},\notag\\		
		%		&=- \sum_{j=1}^{i} m_{{2i},2j-1}\left(b_{2j-1}- \sum_{k=1}^{2j-2} m_{2j-1,k}x_k \right) -\sum_{j=1}^{i-1} %m_{{2i},2j}\left(b_{2j}- \sum_{k=1}^{2j-1} m_{2j,k}x_k +2x_{2j}\right)     \notag\\       
		&=-\sum_{j=1}^{2i-1} m_{{2i},j}b_j +\sum_{j=1}^{2i-1}\sum_{k=1}^{j-1} m_{{2i},j}m_{j,k}x_k- \sum_{j=1}^{i-1}2m_{2i,2j}x_{2j}\notag\\
		&=0 + \sum_{k=1}^{2i-2} \left(\sum_{j=k+1 }^{2i-1} m_{{2i},j}m_{j,k}\right)x_k- \sum_{j=1}^{i-1}2m_{2i,2j}x_{2j}, \text{ by Lemma~\ref{ab=0}},\notag\\
		&= \sum_{k=1}^{2i-2} \left(\sum_{j=k+1 }^{2i-1} (-1)^{i_0+j-1}{2i-k \choose j-k} m_{2i,k}\right)x_k- \sum_{j=1}^{i-1}2m_{2i,2j}x_{2j}, \text{ by (\ref{aijk}),}\notag\\
		%		&=  \sum_{k=1}^{2i-2} \left(m_{2i,k}\sum_{j=k+1 }^{2i-1}  (-1)^{j-1}{2i-k \choose j-k} \right)x_k- %\sum_{j=1}^{i-1}2m_{2i,2j}x_{2j}\notag\\
		&= \sum_{k=1}^{2i-2} \left(m_{2i,k}\sum_{j=1 }^{2i-k-1} (-1)^{j+k-1}{2i-k \choose j} \right)x_k- \sum_{j=1}^{i-1}2m_{2i,2j}x_{2j}\notag\\
		%delete%		&=  \sum_{k=1}^{i} \left(   m_{2i,2k-1} \sum_{j=1 }^{2i-2k}  (-1)^{j+2k-2}{2i-2k+1 \choose j}     %delete% \right)x_{2k-1}\notag\\
		%		&~~~~ +\sum_{k=1}^{i-1} \left(    m_{2i,2k} \sum_{j=1 }^{2i-2k-1}  (-1)^{j+2k-1}{2i-2k \choose j}   %\right)x_{2k}- \sum_{j=1}^{i-1}2m_{2i,2j}x_{2j}\notag\\		
		&= \sum_{k=1}^{i} \left( m_{2i,2k-1} \sum_{j=1 }^{2i-2k} (-1)^{j}{2i-2k+1 \choose j} \right)x_{2k-1}\notag\\
		&~~~~ +\sum_{k=1}^{i-1} \left( m_{2i,2k} \sum_{j=1 }^{2i-2k-1} -(-1)^{j}{2i-2k \choose j} \right)x_{2k}- \sum_{j=1}^{i-1}2m_{2i,2j}x_{2j}\notag\\
		&= \sum_{k=1}^{i} \left(m_{2i,2k-1} (-1-(-1)^{2i-2k+1}) \right)x_{2k-1}\notag\\
		&~~~~ +\sum_{k=1}^{i-1} \left(-(-1-(-1)^{2i-2k})m_{2i,2k} \right)x_{2k}- \sum_{j=1}^{i-1}2m_{2i,2j}x_{2j}\notag\\
		%		&=  \sum_{k=1}^{i} 0m_{2i,2k-1} x_{2k-1} +\sum_{k=1}^{i-1} 2 m_{2i,2k} x_{2k}- %\sum_{j=1}^{i-1}2m_{2i,2j}x_{2j}\notag\\
		&=0 \text{ in } \mathbb{F}_{p^{{s}/{2}}}. 
	\end{align}
	
	From (\ref{O2}), we have 
	\begin{align}
		\label{ODDp-ODD2} \Psi\left(b_{2i+1}-\sum_{j=1}^{2i} m_{{2i+1},j}x_j\right) &=\Psi(b_{2i+1})-\sum_{j=1}^{2i} m_{{2i+1},j}\Psi(x_j)\notag\\
		&=0-\left( \sum_{j=1}^{i} m_{{2i+1},2j-1}\Psi(x_{2j-1}) + \sum_{j=1}^{i} m_{{2i+1},2j}\Psi(x_{2j})\right),\notag\\
		&~~~~ \text{ since }b_{2i+1}\in \mathbb{F}_{p^\nu}\text{ for all }i=1,2,\dots, \mu_1,\notag\\
		&=- \left(\sum_{j=1}^{i} m_{{2i+1},2j-1}\left({\rm Tr}(x_{2j-1})-2x_{2j-1}\right)+ \sum_{j=1}^{i} m_{{2i+1},2j}\Psi\left(x_{2j}\right)\right),\notag\\
		&~~~~ \text{ since } {\Psi(\alpha)=\rm Tr}(\alpha)-2\alpha \text{ for all } \alpha\in \mathbb{F}_{p^s},\notag\\
		%		&=-\sum_{j=1}^{i} m_{{2i+1},2j-1}\left(b_{2j-1}- \sum_{k=1}^{2j-2} m_{2j-1,k}x_k -2x_{2j-1}\right)\notag\\
		%		&~~~~-\sum_{j=1}^{i} m_{{2i+1},2j}\left(b_{2j}- \sum_{k=1}^{2j-1} m_{2j,k}x_k \right)\notag\\
		&=-\sum_{j=1}^{2i} m_{{2i+1},j}b_j +\sum_{j=1}^{2i}\sum_{k=1}^{j-1} m_{{2i+1},j}m_{j,k}x_k+ \sum_{j=1}^{i}2m_{2i+1,2j-1}x_{2j-1}\notag\\
		&=0 + \sum_{k=1}^{2i-1} \left(\sum_{j=k+1 }^{2i} m_{{2i+1},j}m_{j,k}\right)x_k+ \sum_{j=1}^{i}2m_{2i+1,2j-1}x_{2j-1},\notag \\
		&~~~~\text{ by Lemma~\ref{ab=0}},\notag\\
		&= \sum_{k=1}^{2i-1} \left(\sum_{j=k+1 }^{2i} (-1)^{i_0+j-1}{2i-k+1 \choose j-k} m_{2i+1,k}\right)x_k \notag\\
		&~~~~+ \sum_{j=1}^{i}2m_{2i+1,2j-1}x_{2j-1}, \text{ by (\ref{aijk}),}\notag\\
		%		&=  \sum_{k=1}^{2i-1} \left(m_{2i+1,k}\sum_{j=k+1 }^{2i}  (-1)^{j-1}{2i-k+1 \choose j-k} \right)x_k+ %\sum_{j=1}^{i}2m_{2i+1,2j-1}x_{2j-1}\notag\\
		%		&=  \sum_{k=1}^{2i-1} \left(m_{2i+1,k}\sum_{j=1 }^{2i-k}  (-1)^{j+k-1}{2i-k +1\choose j} \right)x_k+ %\sum_{j=1}^{i}2m_{2i+1,2j-1}x_{2j-1}\notag\\	
		%delete%		&=  \sum_{k=1}^{i} \left( m_{2i+1,2k-1}  \sum_{j=1 }^{2i-2k-1}  (-1)^{j+2k-2}{2i-2k +2\choose j}    %%\right)x_{2k-1}\notag\\
		&= \sum_{k=1}^{i} \left( m_{2i+1,2k-1} \sum_{j=1 }^{2i-2k-1} (-1)^{j}{2i-2k +2\choose j} \right)x_{2k-1}\notag\\
		&~~~~ + \sum_{k=1}^{i} \left(m_{2i,2k} \sum_{j=1 }^{2i-2k} -(-1)^{j}{2i-2k +1\choose j} \right)x_{2k}+ \sum_{j=1}^{i}2m_{2i+1,2j-1}x_{2j-1}\notag\\
		&= \sum_{k=1}^{i} \left( (-1-(-1)^{2i-2k +2}) m_{2i+1,2k-1} \right)x_{2k-1}\notag\\
		&~~~~ + \sum_{k=1}^{i} \left( -(-1-(-1)^{2i-2k +1}) m_{2i,2k} \right)x_{2k}+ \sum_{j=1}^{i}2m_{2i+1,2j-1}x_{2j-1}\notag\\
		%		&=  \sum_{k=1}^{i} (-2)  m_{2i+1,2k-1}x_{2k-1} + \sum_{k=1}^{i} 0m_{2i,2k}    x_{2k}+ %\sum_{j=1}^{i}2m_{2i+1,2j-1}x_{2j-1}\notag\\
		&=0 \text{ in }\mathbb{F}_{p^{s/2}}. 
	\end{align}

This case is completed.

\vskip1em
\noindent{\bf Case 2.} $p$ is odd and  $i_1=2\mu_1$ is even. 

From \cite{KLL2012}, we have
	\begin{align}
		\label{MpI1even} M(p^a,i_1)= {  \left( 
		\begin{array}{cccccc}
			0&0&0&0&\cdots&0\\
			*&2&0&0&\cdots&0\\
			*&*&0&0&\cdots&0\\
			*&*&*&2&\cdots&0\\
			\vdots&\vdots&\vdots&\vdots&\ddots&\vdots\\
			*&*&*&*&\cdots&2\\
		\end{array}
		\right)}, 
	\end{align}
   where $*$'s denote entries of $M(p^a,i_1)$  defined in (\ref{M}).

	From (\ref{H-eq}) and (\ref{MpI1even}), we have 
	\begin{align}
		\Psi(x_{2i-1})&=b_{2i-1}- \sum_{j=1}^{2i-2} m_{{2i-1},j}x_j \label{OE1} 
\end{align}
and 
\begin{align}
		{\rm Tr}(x_{2i})&=b_{2i}- \sum_{j=1}^{2i-1} m_{{2i},j}x_j\label{OE2} 
	\end{align}
	for all integers $1\leq i\leq \mu_1$.
	
	This implies that (\ref{H-eq}) has a solution if and only if the right hand side of (\ref{OE1}) is in $\Psi(\mathbb{F}_{p^s})$ and the right hand side of (\ref{OE2}) is in ${\mathbb{F}_{p^{s/2}}}$.
% In this case, we have 
%	\begin{align*}
%		x_{2i-1} &\in \Psi^{-1}\left(b_{2i-1}-\sum_{j=1}^{2i-2} %m_{{2i-1},j}x_j\right) 
%\end{align*}
%and 
%\begin{align*}
%		x_{2i}&\in {\rm Tr}^{-1}\left(b_{2i}- \sum_{j=1}^{2i-1} %m_{{2i},j}x_j\right) 
%	\end{align*}
%	for all $1\leq i\leq \mu_1$. 
In this case, by Lemma \ref{lemma2}, the number of  solutions of (\ref{H-eq}) is $p^{{{si_1}/{2}}}$.
	
	By Lemma \ref{lemma1}, it is sufficient to show that the image under the trace map of the right hand side of (\ref{OE1})   and the image under $\Psi$ of the right hand side of (\ref{OE2}) are $0$. 
Using computations similar to those in (\ref{ODDp-ODD}) and (\ref{ODDp-ODD2}), the desired properties can be concluded.

%Let $1\leq i\leq \mu_1$ be an integer. 
%	
%	From (\ref{OE1}), computations similar to those in (\ref{ODDp-ODD}) reveal
%	\begin{align*}
%		{\rm Tr}\left(b_{2i-1}-\sum_{j=1}^{2i-2} m_{{2i-1},j}x_j\right) 
%%&={\rm Tr}(b_{2i-1})- \sum_{j=1}^{2i-2} m_{{2i-1},j}{\rm Tr}(x_j)
%=0 \text{ in }\mathbb{F}_{p^{s/2}}. 
%	\end{align*}
%	
%	Similarly, from (\ref{OE2}), 
%\begin{align*}
%		%\label{ODDp-Even2} 
%\Psi\left(b_{2i+1}-\sum_{j=1}^{2i} m_{{2i+1},j}x_j\right) %%&=\Psi(b_{2i+1})-\sum_{j=1}^{2i} m_{{2i+1},j}\Psi(x_j)
%=0 \text{ in }\mathbb{F}_{p^{s/2}}
%	\end{align*}
%can be obtained using the computations similar to those in (\ref{ODDp-ODD2}).

\vskip1em

\noindent{\bf Case 3.} $p=2$   and $i_1=2\mu_1+1$ is odd. 
	
From \cite{KLL2012}, we have
	\begin{align}
		 M(p^a,i_1)
={   \left( 
		\begin{array}{cccccc}
			0&0&0&0&\cdots&0\\
			1&0&0&0&\cdots&0\\
			*&0&0&0&\cdots&0\\
			*&*&1&0&\cdots&0\\
			\vdots&\vdots&\vdots&\vdots&\ddots&\vdots\\
			*&*&*&*&\cdots&0\\
		\end{array}
		\right) }, \label{M2I1odd}
\end{align}   
     where $*$'s denote entries of $M(p^a,i_1)$  defined in (\ref{M}).

From  (\ref{H-eq}) and (\ref{M2I1odd}), we conclude that 
	\begin{align}
		{\rm Tr}(x_1)&=b_1, \label{e0}
	\end{align}
	\begin{align}
		{\rm Tr}(x_{2i})&=b_{2i}+ \sum_{j=1}^{2i-1} m_{{2i},j}x_j, \label{e1}
		\end{align}
  and 
		\begin{align}
		{\rm Tr}(x_{2i+1})&=b_{2i+1}+ \sum_{j=1}^{2i-1} m_{{2i+1},j}x_j\label{e2} 
	\end{align}
	for all integers $1\leq i\leq \mu_1$.
	
Similar to Cases 1 and 2, we need to show that  the right hand sides of (\ref{e0}), (\ref{e1}), and (\ref{e2}) are in ${\mathbb{F}_{2^{{s}/{2}}}}$, or equivalently, the images under the trace map of the right hand sides of (\ref{e0}), (\ref{e1}), and (\ref{e2}) are $0$. Clearly, the right hand side of (\ref{e0}) is $b_1\in \{0,1\}\subseteq {\mathbb{F}_{2^{s}/{2}}}$ and ${\rm Tr}(b_1)=0$. Let $1\leq i\leq \mu_1$ be an integer. 
	
	From (\ref{e1}), we have 
	\begin{align}
		\label{ODD} {\rm Tr}\left(b_{2i}+\sum_{j=1}^{2i-1} m_{{2i},j}x_j\right) &=\sum_{j=1}^{2i-1} m_{{2i},j}{\rm Tr}(x_j)\notag\\
		&=\sum_{j=1}^{2i-1} m_{{2i},j}\left(b_j+ \sum_{k=1}^{j-1} m_{j,k}x_k \right)\notag\\
		&=\sum_{j=1}^{2i-1} m_{{2i},j}b_j +\sum_{j=1}^{2i-1}\sum_{k=1}^{j-1} m_{{2i},j}m_{j,k}x_k\notag\\
		&=0 + \sum_{k=1}^{2i-2} \left(\sum_{j=k+1 }^{2i-1} m_{{2i},j}m_{j,k}\right)x_k, \text{ by Lemma~\ref{ab=0},}\notag\\
		&= \sum_{k=1}^{2i-2} \left(\sum_{j=k+1 }^{2i-1} {2i-k \choose j-k} m_{2i,k}\right)x_k, \text{ by (\ref{aijk}),}\notag\\		
		%&= \sum_{k=1}^{2i-2} \left(m_{2i,k}\sum_{j=k+1 }^{2i-1}  {2i-k \choose j-k} \right)x_k\notag\\
		&=\sum_{k=1}^{2i-2} \left(m_{2i,k}\sum_{j=1 }^{2i-k-1} {2i-k \choose j} \right)x_k\notag\\
		&=\sum_{k=1}^{2i-2} \left(m_{2i,k}(2^{2i-k} -2)\right)x_k\notag\\
		&=0 \text{ in }\mathbb{F}_{2^{s/2}}. 
	\end{align}
	
	Applying a similar  computation to (\ref{e2}) yields 
	\begin{align*}
		{\rm Tr}\left(b_{2i+1}+\sum_{j=1}^{2i} m_{{2i+1},j}x_j\right) %&=\sum_{j=1}^{2i} m_{{2i+1},j}b_j + \sum_{k=1}^{2i-1} \left(m_{2i+1,k}(2^{2i+1-k} -2)\right)x_k \\
		&=0 \text{ in }\mathbb{F}_{2^{{s}/{2}}}. 
	\end{align*}

%(\ref{H-eq}) becomes 
%	\begin{align}
%		\label{eq:subOdd} M(p^a,i_1)\boldsymbol{x}+ %\Psi(\boldsymbol{x})=\boldsymbol{b}, 
%	\end{align}
%	where $\Psi(\boldsymbol{x})$ denotes the image of  %$\boldsymbol{x}$ under  the  componentwise $\Psi$.

\noindent{\bf Case 4.} $p=2$  and $i_1=2\mu_1+2$ is even. 
	
	Form \cite{KLL2012},  we have 
\begin{align}
		 M(p^a,i_1)=
 { \left( 
		\begin{array}{ccccccc}
			0&0&0&0&0&\cdots&0\\
			0&0&0&0&0&\cdots&0\\
			*&1&0&0&0&\cdots&0\\
			*&*&0&0&0&\cdots&0\\
			*&*&*&1&0&\cdots&0\\
			\vdots&\vdots&\vdots&\vdots&\vdots&\ddots&\vdots\\
			*&*&*&*&*&\cdots&0\\
		\end{array}
		\right)},   \label{M2I1even} 
\end{align}
   where $*$'s denote entries of $M(p^a,i_1)$  defined in (\ref{M}).
	
	From  (\ref{H-eq}) and (\ref{M2I1even}), it follows that 
	\begin{align}
		{\rm Tr}(x_1)&=b_1 \label{E0},
		\end{align}
	\begin{align}
		{\rm Tr}(x_2)&=b_2 \label{E00},
		\end{align}
	\begin{align}
		{\rm Tr}(x_{2i+1})&=b_{2i+1}+ \sum_{j=1}^{2i} m_{{2i+1},j}x_j,  \label{EE1}
		\end{align}
and 
		\begin{align}
		{\rm Tr}(x_{2i+2})&=b_{2i+2}+ \sum_{j=1}^{2i} m_{{2i+2},j}x_j\label{EE2} 
	\end{align}
	for all integers $1\leq i\leq \mu_1$. 

Similar to the previous cases, we need to show that  the right hand sides  of (\ref{E0}), (\ref{E00}), (\ref{EE1}), and (\ref{EE2}) are in ${\mathbb{F}_{2^{s/2}}}$. Equivalently,  the images under the trace map of the right hand sides of (\ref{E0}), (\ref{E00}), (\ref{EE1}), and (\ref{EE2}) are $0$. Clearly, the traces of the right hand sides of (\ref{E0}) and (\ref{E00}) are $0$ since $b_1, b_2\in \{0,1\}\subseteq {\mathbb{F}_{2^{s/2}}}$. 

For each integer $1\leq i\leq \mu_1$, using computations similar to those in (\ref{ODD}), the trace of the right hand sides of (\ref{EE1})  and (\ref{EE2}) become $0 \text{ in }\mathbb{F}_{2^{s/2}} $.

%\begin{align*}
%		{\rm Tr}\left(b_{2i+1}+\sum_{j=1}^{2i} m_{{2i+1},j}x_j\right) %%&=\sum_{j=1}^{2i} m_{{2i+1},j}b_j + \sum_{k=1}^{2i-1} %%\left(m_{2i+1,k}(2^{2i+1-k} -2)\right)x_k
%=0 \text{ in }\mathbb{F}_{2^{s/2}} 
%	\end{align*}
%	and from (\ref{EE2}), we have 
%	\begin{align*}
%		&{\rm Tr}\left(b_{2i+2}+\sum_{j=1}^{2i+1} %m_{{2i+2},j}x_j\right) %=\sum_{j=1}^{2i+1} m_{{2i+2},j}b_j + %\sum_{k=1}^{2i} \left(m_{2i+2,k}(2^{2i+2-k} -2)\right)x_k\notag 
%=0 \text{ in }\mathbb{F}_{2^{s/2}}. 
%	\end{align*}
	
	The proof is now completed.  
%\end{proof}
%\begin{lemma}
%	\label{oddp} Let $p$ be an odd prime and let $s$ be an even positive %integer. Let $i_1$ be a positive integer such that $i_1\leq p^{a-1}$. Then the %number of  solutions of (\ref{H-eq}) in $\mathbb{F}_{p^s}^{i_1}$ is 
%	\begin{align*}
%		p^{{si_1}/{2}}. 
%	\end{align*}
%\end{lemma}
%\begin{proof}
\end{proof}

The next theorem follows immediately from Propositions~\ref{Structure-cyclic} and~\ref{DualH-torsion}. 
\begin{theorem}
	\label{NHC} Let $p$ be a prime and let $s,a$ be positive integers such that $s$ is even. Then
	\[N_{\rm H}({\rm GR}(p^2,s),p^a)= \sum_{i_1=0}^{p^{a-1}} p^{{si_1}/{2}}= \frac{p^{{{s(p^{a-1}+1)}/{2}}}-1}{p^{{{s}/{2}}}-1}.\]
\end{theorem}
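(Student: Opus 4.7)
The plan is to combine the unique-ideal representation from Proposition~\ref{Structure-cyclic} with the solution count of equation (\ref{H-eq}) given by Proposition~\ref{DualH-torsion}. The whole argument is essentially a bookkeeping step that partitions the set of Hermitian self-dual cyclic codes according to the first torsion index $i_1$.

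First I would recall that any Hermitian self-dual code $C$ of length $p^a$ over ${\rm GR}(p^2,s)$ has cardinality $(p^s)^{p^a}$, which together with Proposition~\ref{Structure-cyclic} forces $i_0+i_1=p^a$; this was already observed just before (\ref{genCH}). Consequently, the pair $(i_0,i_1)$ in the unique representation of $C$ is determined by the single parameter $i_1\in\{0,1,\ldots,p^{a-1}\}$, and the remaining data is the Teichm\"{u}ller tuple $(h_0,\ldots,h_{i_1-1})\in\mathcal{T}_s^{i_1}$.

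Next, for each fixed $i_1\geq 1$, the equivalence established right after (\ref{H-eq}) says that, under the natural identification $\mathcal{T}_s\leftrightarrow\mathbb{F}_{p^s}$, the code in (\ref{cyclic}) is Hermitian self-dual if and only if $(h_0,\ldots,h_{i_1-1})$ solves (\ref{H-eq}) over $\mathbb{F}_{p^s}^{i_1}$. Proposition~\ref{DualH-torsion} then gives exactly $p^{si_1/2}$ such codes. The edge case $i_1=0$ contributes the unique code $p\,{\rm GR}(p^2,s)[X]/\langle X^{p^a}-1\rangle$ identified in the text, which matches $p^{s\cdot 0/2}=1$, so the formula $p^{si_1/2}$ uniformly counts the codes with first torsion index $i_1$ for all $0\leq i_1\leq p^{a-1}$.

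Finally, I would sum these contributions over $i_1=0,1,\ldots,p^{a-1}$ and invoke the geometric series identity
$$\sum_{i_1=0}^{p^{a-1}} \bigl(p^{s/2}\bigr)^{i_1}=\frac{p^{s(p^{a-1}+1)/2}-1}{p^{s/2}-1}$$
to obtain the closed form. No genuine obstacle arises at this stage, since all the substantive work is already encapsulated in Proposition~\ref{DualH-torsion}; the theorem is simply its natural packaging together with the structure theorem. The only minor point to check is that the Teichm\"{u}ller-coefficient parametrization and the $\mathbb{F}_{p^s}$-valued unknowns in (\ref{H-eq}) are in bijection, which is immediate from the definition of $\mathcal{T}_s$ and the reduction modulo $p$.
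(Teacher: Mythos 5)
Your proposal is correct and follows exactly the route the paper takes: the paper simply states that Theorem~\ref{NHC} ``follows immediately from Propositions~\ref{Structure-cyclic} and~\ref{DualH-torsion},'' and your write-up is the natural expansion of that one-line argument (fixing $i_0+i_1=p^a$, counting $p^{si_1/2}$ codes for each first torsion index $i_1\in\{0,\dots,p^{a-1}\}$, and summing the geometric series). No discrepancies to report.
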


\section{Euclidean Self-Dual Cyclic Codes of Arbitrary Length over ${\rm GR}(p^2,s)$}\label{sec4}

In this section, we focus on cyclic codes of any length $n$ over  ${\rm GR}(p^2,s)$. We generalize   the  decomposition in \cite{DL2006}  (see also \cite{B2003} and \cite{DP2007}) to this case.  Combining with the results in \cite{KLL2008}, \cite{KLL2012}, and  in the previous section, we characterize and enumerate Euclidean self-dual cyclic codes of  any length $n$ over  ${\rm GR}(p^2,s)$. Some corrections to    \cite{B2003} and \cite{DL2006}  are also provided.

Write  $n=mp^a$, where $p\nmid m$ and $a\geq 0$.  
Let $R_{p^2}(u,s):={\rm GR}(p^2,s)[u]/\langle u^{p^a}-1 \rangle$.  Let $\widetilde{~~}$ be an involution on $R_{p^2}(u,s)$ that fixes ${\rm GR}(p^2,s)$ and that maps $u^i$ to $u^{-i\,({\rm mod}\,p^a)}$ for all $0\leq i< p^a$, and extend $\widetilde{~~}$ naturally to ${\rm GR}(p^2,s\nu)[u]/\langle u^{p^a}-1 \rangle$ for all positive integers $\nu$.  It is not difficult to verify that  the map  $\Phi: (R_{p^2}(u,s))^m\to {\rm GR}(p^2,s)[X]/\langle X^{mp^a}-1\rangle$ defined by 
\[\left(\sum_{j=0}^{p^a-1} c_{0,j}  u^{j}, \sum_{j=0}^{p^a-1} c_{1,j}  u^{j},\dots, \sum_{j=0}^{p^a-1} c_{m-1,j}  u^{j}\right)\mapsto \sum_{i=0}^{m-1} \sum_{j=0}^{p^a-1} c_{i,j}  X^{i+jm} \]
is a ${\rm GR}(p^2,s)$-module isomorphism. 

The following lemma is an obvious generalization of \cite[Lemma 5.1]{DL2006}.
\begin{lemma}\label{ortho} Let  $\boldsymbol{d}=(d_0,d_1,\dots,d_{m-1})$ and $\boldsymbol{d}^\prime=(d_0^\prime,d_1^\prime,\dots,d_{m-1}^\prime)$ be elements in $(R_{p^2}(u,s))^m$. 
Then $[\boldsymbol{d},\boldsymbol{d}^\prime]:=\sum_{i=0}^{m-1}d_i\widetilde{d_i^\prime}=0$ if and only if  $\langle X^{mi}\Phi(\boldsymbol{d}), \Phi(\boldsymbol{d}^\prime)\rangle_{\rm E}$ for all $0\leq i\leq p^a-1$.
\end{lemma}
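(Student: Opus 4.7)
The plan is to verify the equivalence by a direct coefficient comparison, showing that for each $\ell \in \{0,1,\dots,p^a-1\}$ the coefficient of $u^\ell$ in $[\boldsymbol{d},\boldsymbol{d}^\prime]$ coincides (up to a reindexing $\ell \mapsto p^a - \ell$) with the Euclidean inner product $\langle X^{m\ell}\Phi(\boldsymbol{d}),\Phi(\boldsymbol{d}^\prime)\rangle_{\rm E}$. Since this is a biconditional at the level of individual coefficients, one direction is each implication.

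First I would fix the notation $d_i = \sum_{j=0}^{p^a-1} c_{i,j}\, u^j$ and $d_i^\prime = \sum_{k=0}^{p^a-1} c_{i,k}^\prime\, u^k$ for $0 \le i \le m-1$. Applying the involution gives $\widetilde{d_i^\prime} = \sum_{k} c_{i,k}^\prime\, u^{-k \,({\rm mod}\,p^a)}$, so inside $R_{p^2}(u,s)$ we obtain
\begin{equation*}
[\boldsymbol{d},\boldsymbol{d}^\prime] \;=\; \sum_{\ell=0}^{p^a-1}\Bigl(\sum_{i=0}^{m-1}\sum_{\substack{j,k \\ j-k \equiv \ell\,({\rm mod}\,p^a)}} c_{i,j}\, c_{i,k}^\prime\Bigr) u^{\ell}.
\end{equation*}
Thus $[\boldsymbol{d},\boldsymbol{d}^\prime]=0$ is equivalent to the vanishing of each of these inner sums as $\ell$ runs over $\{0,1,\dots,p^a-1\}$.

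Next I would compute the right-hand side. By the definition of $\Phi$,
\begin{equation*}
X^{m\ell}\Phi(\boldsymbol{d}) \;=\; \sum_{i,j} c_{i,j}\, X^{i + (j+\ell)m \,({\rm mod}\,mp^a)}, \qquad \Phi(\boldsymbol{d}^\prime) \;=\; \sum_{i',k} c_{i',k}^\prime\, X^{i' + km}.
\end{equation*}
Since $0 \le i,i' < m$ while the other exponents are multiples of $m$, equality of exponents modulo $mp^a$ forces $i = i'$ and $j+\ell \equiv k \pmod{p^a}$. Consequently
\begin{equation*}
\langle X^{m\ell}\Phi(\boldsymbol{d}),\Phi(\boldsymbol{d}^\prime)\rangle_{\rm E} \;=\; \sum_{i=0}^{m-1}\sum_{\substack{j,k \\ k-j \equiv \ell\,({\rm mod}\,p^a)}} c_{i,j}\, c_{i,k}^\prime,
\end{equation*}
which is exactly the coefficient of $u^{p^a - \ell \,({\rm mod}\,p^a)}$ in $[\boldsymbol{d},\boldsymbol{d}^\prime]$.

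Finally I would close the argument by observing that $\ell \mapsto p^a-\ell \pmod{p^a}$ is a bijection of $\{0,1,\dots,p^a-1\}$, so the family of inner products $\{\langle X^{m\ell}\Phi(\boldsymbol{d}),\Phi(\boldsymbol{d}^\prime)\rangle_{\rm E}\}_{\ell=0}^{p^a-1}$ vanishes simultaneously if and only if all coefficients of $[\boldsymbol{d},\boldsymbol{d}^\prime]$ vanish, i.e.\ if and only if $[\boldsymbol{d},\boldsymbol{d}^\prime] = 0$. The only real obstacle is disciplined index bookkeeping: correctly tracking which exponents are reduced modulo $p^a$ (those coming from multiplication in $R_{p^2}(u,s)$) and which modulo $mp^a$ (those coming from $X^{mp^a} - 1$), and verifying that the constraint $0 \le i, i' < m$ isolates a unique residue class so that the comparison of coefficients is valid.
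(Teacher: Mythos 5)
Your proof is correct: the coefficient bookkeeping (the coefficient of $u^{p^a-\ell}$ in $[\boldsymbol{d},\boldsymbol{d}^\prime]$ equals $\langle X^{m\ell}\Phi(\boldsymbol{d}),\Phi(\boldsymbol{d}^\prime)\rangle_{\rm E}$, because $0\le i<m$ pins down the residue class and the shift by $X^{m\ell}$ acts as $j\mapsto j+\ell \pmod{p^a}$) is exactly the right verification. The paper itself gives no proof, merely calling the lemma an obvious generalization of \cite[Lemma 5.1]{DL2006}, and your direct computation is the standard argument that justifies that claim (and, as a bonus, supplies the ``$=0$'' missing from the statement of the inner-product condition).
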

The lemma implies that, for cyclic codes $C_1$ and $C_2$ of  length $n$ over  ${\rm GR}(p^2,s)$, $C_2=C_1^{\perp_{\rm E}}$ if and only if   $\Phi^{-1}(C_2)$ is the dual of $\Phi^{-1}(C_1)$ under the form $[\cdot,\cdot]$. In particular, $C_1$ is Euclidean self-dual if and only if $\Phi^{-1}(C_1)$ self-dual under the form $[\cdot,\cdot]$.

\subsection{Decomposition}
 We generalize the Discrete Fourier Transform decomposition for cyclic codes over $\mathbb{Z}_4$  in \cite{DL2006}    to cyclic codes over ${\rm GR}(p^2,s)$ as follows.
 
Let $M$ be the multiplicative order of $p^s$ modulo $m$ and let $\zeta$ denote a primitive $m$th root of unity in  ${\rm GR}(p^2,sM)$.
The Discrete Fourier Transform  of 
\[c(X)= \sum_{i=0}^{m-1}\sum_{j=0}^{p^a-1} c_{i,j}X^{i+jm}\in {\rm GR}(p^2,s)[X]/\langle X^{mp^a}-1\rangle\]
 is the vector
\[(\widehat{c}_0, \widehat{c}_1,\dots, \widehat{c}_{m-1})\in R_{p^2}(u,sM)^m\]
with
\[\widehat{c}_h=\sum_{i=0}^{m-1}\sum_{j=0}^{p^a-1} c_{i,j}  u^{m^\prime i+j} \zeta^{hi} \]
for all $0\leq h\leq m-1$, where $mm^\prime\equiv 1 \,({\rm mod}\,p^a)$.

Define the Mattson-Solomon polynomial of $c(X)$ to be 
\[\widehat{c}(Z)= \sum_{h=0}^{m-1} \widehat{c}_{m-h \,({\rm mod}\,m) } Z^h. \] 
Then the following lemma can be obtained as an extension of  \cite[Lemma 3.1]{DL2006}. 
\begin{lemma}[Inversion formula]  Let $c(X) \in {\rm GR}(p^2,s)[X]/\langle X^{mp^a}-1\rangle$ with $\widehat{c}(Z)$ its Mattson-Solomon polynomial as defined above. Then 
\[	c(X)=\Phi\left( (1,u^{-m^\prime},u^{-2m^\prime},\dots, u^{-(m-1)m^\prime})\star\frac{1}{m}(\widehat{c}(1), \widehat{c}(\zeta),\dots, \widehat{c}(\zeta^{m-1})) \right),\]
where $\star$ indicates  the componentwise multiplication. 
\end{lemma}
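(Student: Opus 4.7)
The plan is to verify the identity by direct computation: compute $\widehat{c}(\zeta^r)$ explicitly in terms of the coefficients $c_{i,j}$, simplify using the orthogonality of $m$-th roots of unity, and then check that applying $\Phi$ to the resulting vector reproduces $c(X)$ coefficient by coefficient.

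First, I would substitute the definition of $\widehat{c}_h$ into $\widehat{c}(Z)=\sum_{h=0}^{m-1}\widehat{c}_{m-h\,({\rm mod}\,m)}Z^h$ and then evaluate at $Z=\zeta^r$ for a fixed $0\le r\le m-1$. Interchanging the finite sums yields
\[
\widehat{c}(\zeta^r)=\sum_{i=0}^{m-1}\sum_{j=0}^{p^a-1} c_{i,j}\,u^{m^\prime i+j}\sum_{h=0}^{m-1}\zeta^{h(r-i)},
\]
since $\zeta^{(m-h)i}\zeta^{mi}=\zeta^{-hi}$ up to $\zeta^m=1$ (a careful bookkeeping of the $h=0$ term versus the $m-h$ indexing needs to be checked here). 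The inner $h$-sum is the standard orthogonality relation: because $\zeta$ is a primitive $m$-th root of unity, it equals $m$ when $i\equiv r\pmod m$ and $0$ otherwise. This collapses the $i$-sum to the single term $i=r$, giving $\widehat{c}(\zeta^r)=m\sum_{j=0}^{p^a-1} c_{r,j}\,u^{m^\prime r+j}$.

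Next, multiplying by the scalar factor $u^{-rm^\prime}/m$ (valid since $p\nmid m$, so $m$ is a unit in ${\rm GR}(p^2,sM)$, and $u$ is a unit in $R_{p^2}(u,s)$) collapses each component of the starred product to
\[
\frac{1}{m}u^{-rm^\prime}\widehat{c}(\zeta^r)=\sum_{j=0}^{p^a-1} c_{r,j}\,u^{j}\in R_{p^2}(u,s).
\]
In particular, the full vector lies in $(R_{p^2}(u,s))^m$, so $\Phi$ is applicable. Applying $\Phi$ and using its defining formula then produces $\sum_{i=0}^{m-1}\sum_{j=0}^{p^a-1} c_{i,j}\,X^{i+jm}=c(X)$, as required.

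The only delicate step is the orthogonality manipulation: the re-indexing $h\mapsto m-h$ inside the Mattson--Solomon definition combines with the exponent $\zeta^{hi}$ coming from $\widehat{c}_h$, and one must keep track of the $h=0$ term (where $m-h\equiv 0\pmod m$) so that the inner sum becomes $\sum_{h=0}^{m-1}\zeta^{h(r-i)}$ cleanly. Once this is settled, the rest is a direct unwinding of definitions, and the argument is essentially the same as the $\mathbb{Z}_4$ case in \cite[Lemma~3.1]{DL2006}, with ${\rm GR}(p^2,s)$ and the $p^a$-fold DFT replacing the $4$ and $2$-fold structure there.
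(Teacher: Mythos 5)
Your proposal is correct and is essentially the argument the paper intends: the paper gives no proof, deferring to \cite[Lemma 3.1]{DL2006}, and your direct computation via the orthogonality relation $\sum_{h=0}^{m-1}\zeta^{h(r-i)}=m\,\delta_{i,r}$ (valid in ${\rm GR}(p^2,sM)$ because $\zeta^{k}-1$ is a unit for $k\not\equiv 0\pmod m$, as $p\nmid m$) is exactly that standard argument transported to ${\rm GR}(p^2,s)$. The bookkeeping points you flag --- the $h=0$ term in the $m-h$ re-indexing, $m$ and $u$ being units, and the components landing in $(R_{p^2}(u,s))^m$ so that $\Phi$ applies --- all check out.
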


%Therefore, the following  diagram commutes.
%\begin{center}
%\begin{tikzpicture}[node distance=2cm, auto]
%  \node (cy) {${\rm GR}(p^2,s)[X]/\langle X^{mp^a}-1\rangle$};
%  \node (Rm) [below of=cy] {$(R_{p^2}(u,s))^m$};
%  \node (DFT0) [right of=cy] {$~$};
%  \node (DFT) [right of=DFT0] {$(R_{p^2}(u,sM))^m$};
   
%  \draw[<-] (cy) to node {$\Phi$} (Rm);
%  \draw[->] (cy) to node  {${\rm DFT}$} (DFT);
%  \draw[->] (Rm) to node  [right] {${\rm DFT}\circ \Phi$} (DFT);
%\end{tikzpicture}
%\end{center}

For $0\leq h\leq m-1$, denote by $S_{p^s}(h)$ the $p^s$-cyclotomic coset of $h$ modulo $m$, {\em i.e.}, 
\[S_{p^s}(h)=\{hp^{is}({\rm mod}\,m)\mid i=0,1,\dots\}, \]
and denote by $m_h$ the size of $S_{p^s}(h)$. Then $m_h$ is the multiplicative order of $p^s$ modulo ${\rm ord}(h)$, where  ${\rm ord}(h)$ denotes the additive order of $h$ modulo $m$. Since ${\rm ord}(h)={\rm ord}(-h)$, we have $m_h=m_{-h}$ for all $0\leq h\leq m-1$.

The $p^s$-cyclotomic coset $S_{p^s}(h)$ is said to be {\em self-inverse} if $S_{p^s}(-h)=S_{p^s}(h)$, or equivalently, $-h \in S_{p^s}(h)$. In this case,   the size $m_h$ of $S_{p^s}(h)$ is $1$ or even (see \cite[Remark 2.6]{JLLX2013}). Moreover, we have $-h=h$ if $m_h=1$, and  $-h=p^{{sm_h/2}}h$ otherwise.  We note that $S_{p^s}(0)$ and $S_{p^s}(\frac{m}{2})$ (if $m$ is even) are self-inverse.  

\begin{remark} We have the following observations for the coefficients of the Discrete Fourier Transform.
\begin{enumerate}[$i)$]
\item If  $S_{p^s}(h)$ is self-inverse of size $1$, then 
  	\begin{align}\label{self-inv-sing}
		\widehat{c}_{m-h}= \widehat{c}_h.
	\end{align}
\item If $S_{p^s}(h)$ is self-inverse of size $2e$, then  $-h=p^{se}h$ and 
	\begin{align}\label{self-inv}
		\widehat{c}_{m-h}&=\sum_{i=0}^{m-1}\sum_{j=0}^{p^a-1} c_{i,j}  u^{m^\prime i+j} \zeta^{-hi}\notag\\
		                 &=\sum_{i=0}^{m-1}\sum_{j=0}^{p^a-1} c_{i,j}  u^{m^\prime i+j} \zeta^{p^{se}i}\notag\\
		                 &=	\overline{\widehat{c}_{h}},
	\end{align}
	where $\bar{~}$ is a natural extension of (\ref{aut}), \textit{i.e.}, $\bar{~}$  fixes $u$ and maps $a+pb $ to $a^{p^{se}}+pb^{p^{se}}$.
\end{enumerate}
\end{remark}

Let $I_0=\{0\}$ if $m$ is odd and $I_0=\{0,\frac{m}{2}\}$ if $m$ is even.
Let $I_1$ be the union of all self-inverse $p^s$-cyclotomic cosets modulo $m$ excluding $I_0$ and set  $I_2=\{0,1,\dots,m-1\}\setminus (I_0\cup I_1)$.   The set $I_2$ is the union of pairs of  $p^s$-cyclotomic cosets of the form $S_{p^s}(h)\cup S_{p^s}(-h)$, where $h\notin I_0\cup I_1$. Let  $J_0$, $J_1$, and $J_2$ be  complete sets of representatives of $p^s$-cyclotomic cosets in $I_0$, $I_1$, and $I_2$, respectively. Without loss of generality,  we assume that $J_2$ is chosen such that $h\in J_2$ if and only if $-h\in J_2$.

The following lemma is a straightforward extension of \cite[Theorem 3.2 and Corollary 3.3]{DL2006}.
\begin{lemma} \label{isoiso}
	The ring ${\rm GR}(p^2,s)[X]/\langle X^{mp^a}-1\rangle$ is isomorphic to \[\prod_{h\in J_0\cup J_1\cup J_2} R_{p^2}(u,sm_h)\] via the ring isomorphism \[c(X)\mapsto (\widehat{c}_{h})_{h\in J_0\cup J_1\cup J_2}.\]
	
	If $C$ is a cyclic code of length $n=mp^a$ over ${\rm GR}(p^2,s)$, then  $C$ is isomorphic to 
\begin{align}
\label{DEC}
\prod_{h\in J_0\cup J_1\cup J_2} C_h,
\end{align}
 where $C_h$ is an ideal in $R_{p^2}(u,sm_h)$ for all  $h\in J_0\cup J_1\cup J_2$.
\end{lemma}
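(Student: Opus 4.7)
The plan is to establish the first isomorphism in two stages and then derive the statement about cyclic codes as a formal consequence. Since $\gcd(m,p^a)=1$, I would pick integers $m^\prime$ and $r$ with $mm^\prime=1+rp^a$, and then observe that inside $A:={\rm GR}(p^2,s)[X]/\langle X^{mp^a}-1\rangle$ the element $u:=X^m$ satisfies $u^{p^a}=1$ while the element $Y:=Xu^{-m^\prime}$ (well defined because $u$ is a unit in $A$) satisfies $Y^m=X^m u^{-mm^\prime}=u\cdot u^{-1}=1$. A short verification shows that the assignment sending $u$ to $u$ and $Y$ to $Y$ induces a ring isomorphism
\[
A\;\cong\;R_{p^2}(u,s)[Y]/\langle Y^m-1\rangle,
\]
under which $X^{i+jm}$ is sent to $u^{im^\prime+j}Y^i$.

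Next I would factor $Y^m-1$ over ${\rm GR}(p^2,s)$. Because $p\nmid m$, Hensel's lemma lifts the factorization of $Y^m-1$ over $\mathbb{F}_{p^s}$ into pairwise coprime monic factors $g_h(Y)$, one for each $p^s$-cyclotomic coset $S_{p^s}(h)$ modulo $m$, where $g_h$ is the minimal polynomial of $\zeta^h$ over ${\rm GR}(p^2,s)$ and $\deg g_h=m_h$. The Chinese Remainder Theorem applied inside $R_{p^2}(u,s)[Y]$ then yields
\[
R_{p^2}(u,s)[Y]/\langle Y^m-1\rangle\;\cong\;\prod_{h\in J_0\cup J_1\cup J_2} R_{p^2}(u,s)[Y]/\langle g_h(Y)\rangle,
\]
and the standard identification ${\rm GR}(p^2,s)[Y]/\langle g_h(Y)\rangle\cong{\rm GR}(p^2,sm_h)$ (sending $Y\mapsto \zeta^h$) gives $R_{p^2}(u,s)[Y]/\langle g_h(Y)\rangle\cong R_{p^2}(u,sm_h)$.

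Composing the two isomorphisms and tracking $X^{i+jm}\mapsto(u^{im^\prime+j}\zeta^{ih})_h$, I would then extend by ${\rm GR}(p^2,s)$-linearity to conclude that the overall ring isomorphism is precisely the map $c(X)\mapsto(\widehat{c}_h)_{h\in J_0\cup J_1\cup J_2}$ described in the statement, matching the defining formula of $\widehat{c}_h$. This establishes the first assertion.

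For the second assertion, a cyclic code of length $mp^a$ over ${\rm GR}(p^2,s)$ is by definition an ideal of $A$, and under any ring isomorphism $A\cong\prod_h R_h$ ideals correspond bijectively to tuples $\prod_h C_h$ where each $C_h$ is an ideal of $R_h$. Applying this to the isomorphism just constructed gives the claimed decomposition $C\cong\prod_h C_h$ with $C_h$ an ideal of $R_{p^2}(u,sm_h)$. The main subtleties I expect are purely bookkeeping: verifying that $X\mapsto u^{m^\prime}Y$ is genuinely a well-defined ring homomorphism (both $X^{mp^a}=1$ and the target relations must be preserved) and checking that the composite really produces the exponent $im^\prime+j$ on $u$ rather than a shifted variant, so that the identification with the Discrete Fourier Transform is exact rather than off by a unit.
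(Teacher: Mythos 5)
Your proof is correct and takes essentially the same route as the paper, which gives no proof of its own but defers to \cite{DL2006}: the change of variables $X=u^{m^\prime}Y$ with $u=X^m$, followed by the Hensel--CRT factorization of $Y^m-1$ into basic irreducible factors indexed by $p^s$-cyclotomic cosets, is precisely the Discrete Fourier Transform decomposition being extended, and your bookkeeping $X^{i+jm}\mapsto u^{im^\prime+j}\zeta^{ih}$ matches the paper's defining formula for $\widehat{c}_h$. The passage from the ring isomorphism to the ideal (i.e., cyclic code) decomposition is the standard fact about ideals in finite products, as you say.
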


\subsection{Euclidean Self-Dual Cyclic Codes}
In this subsection, we consider the Euclidean dual of cyclic codes of length $n=mp^a$ over ${\rm GR}(p^2,s)$. The characterization  and enumeration of  Euclidean self-dual  cyclic codes of length $n$ are established, where $p$ is a prime such that $p\nmid m$ and $a\geq 0$.

\begin{lemma}\label{lem-inner} Let $c(X) $ and $c^\prime(X) $  be polynomials in $ {\rm GR}(p^2,s)[X]/\langle X^{mp^a}-1\rangle$ with    Mattson-Solomon polynomials 	$\widehat{c}(Z)= \sum_{h=0}^{m-1} \widehat{c}_{m-h \,({\rm mod}\,m)}  Z^h$ and $\widehat{c^\prime}(Z)= \sum_{h=0}^{m-1} \widehat{c^\prime}_{m-h \,({\rm mod}\,m) } Z^h$, respectively. Let $(d_0,d_1,\dots,d_{m-1})=\Phi^{-1}(c(X))$ and $(d_0^\prime,d_1^\prime,\dots,d_{m-1}^\prime)=\Phi^{-1}(c^\prime(X))$. Then 
\begin{align}\label{eq-ortho}
\sum_{i=0}^{m-1}d_i\widetilde{d_i^\prime}
=\frac{1}{m}\left(\sum_{j\in I_0} \widehat{c}_j \widetilde{ {\widehat{c^\prime}_{j}}} 
+\sum_{h\in I_1}  \widehat{c}_h\widetilde{ \overline{\widehat{c^\prime}_{h}}}   
+\sum_{k\in I_2}  \widehat{c}_k \widetilde{\widehat{c^\prime}}_{m-k \,({\rm mod}\,m)}  \right),\end{align}	
where $\bar{~}$ is defined as in (\ref{aut}) in the appropriate Galois extension.
\end{lemma}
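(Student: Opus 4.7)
The plan is to substitute the inversion formula from the preceding lemma into both $d_i$ and $d_i'$, expand the left-hand side of (\ref{eq-ortho}), and collapse the resulting double sum by the orthogonality of powers of $\zeta$. The three-term splitting on the right will then arise from partitioning the index set $\{0,1,\dots,m-1\} = I_0 \cup I_1 \cup I_2$ and invoking (\ref{self-inv-sing}) and (\ref{self-inv}) to recast the $I_0$ and $I_1$ contributions.

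First I would use the inversion formula to write $d_i = \frac{1}{m}\,u^{-im'}\widehat{c}(\zeta^i)$, and after the reindexing $k \equiv m-h \,({\rm mod}\,m)$ in the Mattson--Solomon expansion of $\widehat{c}(Z)$ obtain $d_i = \frac{1}{m}\,u^{-im'}\sum_{k=0}^{m-1}\widehat{c}_k\,\zeta^{-ik}$; similarly for $d_i'$. Next I would apply $\widetilde{~}$ to $d_i'$, noting that $\widetilde{~}$ fixes ${\rm GR}(p^2,sM)$ (and in particular $\zeta$) while sending $u^j$ to $u^{-j}$. Forming the product $d_i\,\widetilde{d_i'}$, the factors $u^{-im'}$ and $u^{im'}$ cancel, and summing over $i$ transforms the left-hand side of (\ref{eq-ortho}) into
\[
\frac{1}{m^2}\sum_{k,k'=0}^{m-1}\widehat{c}_k\,\widetilde{\widehat{c'}_{k'}}\sum_{i=0}^{m-1}\zeta^{-i(k+k')}.
\]
Since $\zeta$ has order $m$, the innermost sum equals $m$ when $k+k' \equiv 0 \,({\rm mod}\,m)$ and vanishes otherwise, collapsing the expression to $\frac{1}{m}\sum_{k=0}^{m-1}\widehat{c}_k\,\widetilde{\widehat{c'}_{m-k\,({\rm mod}\,m)}}$.

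Finally I would split this single sum along the partition $\{0,1,\dots,m-1\} = I_0 \cup I_1 \cup I_2$. For $j \in I_0$ one has $m-j \equiv j \,({\rm mod}\,m)$, so the contribution is $\widehat{c}_j\,\widetilde{\widehat{c'}_j}$. For $h \in I_1$, the identity (\ref{self-inv}) yields $\widehat{c'}_{m-h} = \overline{\widehat{c'}_h}$, producing the term $\widehat{c}_h\,\widetilde{\overline{\widehat{c'}_h}}$. For $k \in I_2$ the term $\widehat{c}_k\,\widetilde{\widehat{c'}_{m-k\,({\rm mod}\,m)}}$ stays in its raw form. Assembling the three pieces yields (\ref{eq-ortho}).

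The only real subtlety will be bookkeeping around $\widetilde{~}$: one must check that it fixes the coefficient ring (hence $\zeta$) and commutes coefficient-wise with the Frobenius-type operator $\bar{~}$ appearing in (\ref{self-inv}), and that the reindexing $k \mapsto m-k \,({\rm mod}\,m)$ respects the partition $I_0 \cup I_1 \cup I_2$. Once those conventions are pinned down, the argument reduces to the standard orthogonality identity for the Discrete Fourier Transform, refined by the self-inversion structure of the $p^s$-cyclotomic cosets.
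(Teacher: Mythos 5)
Your proposal is correct and follows essentially the same route as the paper: the paper cites the computation in Dougherty--Ling (Equation (21)) to obtain $\sum_i d_i\widetilde{d_i^\prime}=\frac{1}{m}\sum_{i}\widehat{c}_i\,\widetilde{\widehat{c^\prime}}_{m-i\,({\rm mod}\,m)}$ and then splits the sum over $I_0\cup I_1\cup I_2$ using (\ref{self-inv-sing}) and (\ref{self-inv}), exactly as you do. The only difference is that you carry out the cited DFT-orthogonality computation explicitly via the inversion formula, which is a harmless (and arguably welcome) elaboration; note also that the commutation of $\widetilde{~}$ with $\bar{~}$ you flag as a subtlety is not actually needed, since the $I_1$ term is obtained by direct substitution of $\widehat{c^\prime}_{m-h}=\overline{\widehat{c^\prime}_h}$.
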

\begin{proof} 
Using   computations similar to those in  \cite[Equation (21)]{DL2006}, we have 
\begin{align*}
\sum_{i=0}^{m-1}d_i\widetilde{d_i^\prime}
&=\frac{1}{m}\left(\sum_{i=0}^{m-1} \widehat{c}_i \widetilde{\widehat{c^\prime}}_{m-i\,({\rm mod}\,m)} \right)\\
&=\frac{1}{m}\left(\sum_{j\in I_0} \widehat{c}_j \widetilde{\widehat{c^\prime}}_{m-j\,({\rm mod}\,m)}  
+\sum_{h\in I_1}  \widehat{c}_h \widetilde{\widehat{c^\prime}}_{m-h\,({\rm mod}\,m)}   
+\sum_{k\in I_2}  \widehat{c}_k \widetilde{\widehat{c^\prime}}_{m-k\,({\rm mod}\,m)}   \right).
\end{align*}
By  (\ref{self-inv-sing}) and (\ref{self-inv}),  we conclude the lemma.
%\begin{align*}\sum_{i=0}^{m-1}d_i\widetilde{d_i^\prime}  
%&=\sum_{j\in I_0}\langle \widehat{c}_j,  { {\widehat{c^\prime}_{j}}}\rangle_{\rm E} 
%+\sum_{h\in I_1}\langle \widehat{c}_h,  \overline{ {\widehat{c^\prime}_{h}}} %\rangle_{\rm E} 
%+\sum_{k\in I_2} \widehat{c}_k {\widehat{c^\prime}_{m-k\,({\rm mod}\,m)}} .
%&=\sum_{j\in I_0}\langle \widehat{c}_j,  { {\widehat{c^\prime}_{j}}}\rangle_{\rm E} 
%+\sum_{h\in I_1}\langle \widehat{c}_h,  { {\widehat{c^\prime}_{h}}} \rangle_{\rm H} 
%+\sum_{k\in I_2}\langle \widehat{c}_k ,{\widehat{c^\prime}_{m-k \,({\rm mod}\,m)}} \rangle_{\rm E}.
%\end{align*} 
\end{proof}

 Based on the isomorphism defined in Lemma \ref{isoiso}, we determine the Euclidean duals  of cyclic codes over ${\rm GR}(p^2,s)$ as follows.
\begin{proposition}\label{char}
Let $C$ be a cyclic code of length $n=mp^a$ over ${\rm GR}(p^2,s)$ decomposed as in (\ref{DEC}), {\em i.e.},
\begin{align}
\label{CC}
C\cong \prod_{j\in J_0} C_j \times \prod_{h \in   J_1 } C_h \times \prod_{k\in  J_2} C_k.
\end{align}
Then 
\begin{align}\label{DC}
C^{\perp_{\rm E}}\cong \prod_{j\in J_0} C_j^{\perp_{\rm E}} \times \prod_{h \in    J_1 } C_h^{\perp_{\rm H}} \times \prod_{k\in  J_2} C^{\perp_{\rm E}}_{m-k\,({\rm mod}\,m)}.
\end{align}

Moreover, $C$ is Euclidean self-dual if and only if $C_j$ is Euclidean self-dual  for all $j\in J_0$, $C_h$ is Hermitian self-dual  for all $h\in J_1$, and  $C_k = C^{\perp_{\rm E}}_{m-k\,({\rm mod}\,m)}$    for all $k\in J_2$.
\end{proposition}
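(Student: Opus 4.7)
The plan is to transport the Euclidean pairing through the Discrete Fourier Transform and then read off the dual componentwise. By the remark following Lemma~\ref{ortho}, $C^{\perp_{\rm E}} = \Phi(D)$, where $D \subseteq (R_{p^2}(u,s))^m$ is the dual of $\Phi^{-1}(C)$ under the bilinear form $[\cdot,\cdot]$. The ring isomorphism of Lemma~\ref{isoiso} carries $\Phi^{-1}(C)$ to $\prod_{h \in J_0 \cup J_1 \cup J_2} C_h$, and Lemma~\ref{lem-inner} rewrites $[\cdot,\cdot]$ (up to the unit $\frac{1}{m}$) as the sum of three groups of terms: a pure term $\widehat{c}_j\,\widetilde{\widehat{c'}_{j}}$ for each $j \in J_0$, a conjugate-twisted term $\widehat{c}_h\,\widetilde{\overline{\widehat{c'}_{h}}}$ for each $h \in J_1$, and a cross term $\widehat{c}_k\,\widetilde{\widehat{c'}_{m-k\,({\rm mod}\,m)}}$ for each $k \in J_2$.

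The three groups involve disjoint sets of components of $c'$, so the $[\cdot,\cdot]$-dual splits as a product over $J_0 \cup J_1 \cup J_2$, and it remains only to identify each factor. For $j \in J_0$, $m_j = 1$, so the pairing lives in $R_{p^2}(u,s)$ and involves only $\widetilde{~}$; this is precisely the bilinear form whose annihilator for a cyclic code of length $p^a$ over ${\rm GR}(p^2,s)$ is the Euclidean dual, so the $j$-th factor of $D$ is $C_j^{\perp_{\rm E}}$. For $h \in J_1$, $m_h$ is even with $-h \equiv p^{sm_h/2}h \pmod m$, and the $\overline{~}$ appearing in the pairing is the unique order-two Galois automorphism of ${\rm GR}(p^2,sm_h)$ specified by (\ref{aut}) and (\ref{self-inv}); consequently $\widehat{c}_h\,\widetilde{\overline{\widehat{c'}_{h}}}$ is the defining form of the Hermitian dual on cyclic codes of length $p^a$ over ${\rm GR}(p^2,sm_h)$, giving the $h$-th factor of $D$ as $C_h^{\perp_{\rm H}}$. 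For $k \in J_2$, the cross term couples the $k$-component of $c$ with the $(m-k)$-component of $c'$; letting $\widehat{c}_k$ range independently over $C_k$ forces the $(m-k)$-component of $c'$ to lie in $C_k^{\perp_{\rm E}}$, and applying this with $k$ replaced by $m-k \in J_2$ identifies the $k$-th factor of $D$ as $C_{m-k\,({\rm mod}\,m)}^{\perp_{\rm E}}$. This yields (\ref{DC}).

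The self-duality characterization then follows by equating $C$ with $C^{\perp_{\rm E}}$ factor by factor in (\ref{CC}) and (\ref{DC}). The main subtlety I expect is the $J_1$ case: one must verify that the conjugation arising in (\ref{self-inv}), which comes from $\zeta^{-h} = \zeta^{p^{sm_h/2}h}$ and is implemented on $\widehat{c}_h \in R_{p^2}(u,sm_h)$ by raising the Teichm\"uller coordinates to the $p^{sm_h/2}$-th power, is indeed the same involution $\overline{~}$ used to define $\langle\cdot,\cdot\rangle_{\rm H}$ on the Galois extension of degree $sm_h$. Once this identification is made, the $J_0$ and $J_2$ factors reduce to bookkeeping and the cross-pairing argument is mechanical.
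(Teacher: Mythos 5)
Your proof is correct and follows essentially the same route as the paper: both transport the Euclidean form through the Discrete Fourier Transform via Lemmas~\ref{ortho}, \ref{lem-inner}, and \ref{isoiso} and identify the dual block by block, with the conjugation on the $J_1$ components yielding the Hermitian dual. The only difference is the final step: you obtain equality directly by varying the components of $c$ independently to force each term of the pairing to vanish, whereas the paper proves the inclusion $D\subseteq C^{\perp_{\rm E}}$ for the candidate code $D$ and concludes equality from a cardinality count.
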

\begin{proof} 
	Let $D$ be a cyclic code of length $n=mp^a$ over ${\rm GR}(p^2,s)$ such that   \[D\cong \prod_{j\in J_0} C_j^{\perp_{\rm E}} \times \prod_{h \in    J_1 } C_h^{\perp_{\rm H}} \times \prod_{k\in  J_2} C^{\perp_{\rm E}}_{m-k\,({\rm mod}\,m)}.\]

Consider $m=1$ in Lemma \ref{ortho}, we have the following facts.
	For each positive integer $\nu$ and  $a,b \in R_{p^2}(u,s\nu)$,       we have $\langle a,b\rangle _{\rm E}=0$  if  $a\widetilde{b}=0$, and
 $\langle a,b\rangle _{\rm H}=0$  if  $a\widetilde{\overline{b}}=0$.
	Therefore, by Lemmas~\ref{ortho} and \ref{lem-inner}, we have   $D\subseteq C^{\perp_{\rm E}}$. The equality follows from their cardinalities.
	
The second part is clear.
\end{proof}

 Since $J_2$ has been  chosen such that $h\in J_2$ if and only if $-h\in J_2$, we can write $J_2$ as a disjoint union $J_2=J_2^\prime\cap J_2^{\prime\prime}$, where  $J_2^\prime\subseteq J_2$ and $J_2^{\prime\prime}=\{-h\mid h\in J_2^\prime\}$.

\begin{corollary}\label{comp} %Let $n$ be a positive integer and write $n=mp^a$, wherer $p$ is a prime, $p\nmid m$, and $a\geq 0$.  Let $J_0$, $J_1$, $J_2$, $m_h$'s, and $m_k$'s be defined as above.
	The  number of Euclidean self-dual codes of length $n=mp^a$ over ${\rm GR}(p^2,s)$ is 
	\[  \left(N_{\rm E}({\rm GR}(p^2,s),p^a ) \right)^{\delta(m)}\times \prod_{h\in J_1} N_{\rm H}( {\rm GR}(p^2,sm_h),p^a ) \times \prod_{k\in J_2^\prime} N({\rm GR}(p^2,sm_k),p^a  ) ,\]
	where 
	\[\delta(m)=\begin{cases}1 &\text{ if $m$ is odd},\\
	                         2 &\text{ if $m$ is even},
	            \end{cases}
	\]
and the empty product is regarded as $1$.
\end{corollary}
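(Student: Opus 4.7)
The plan is to apply Proposition~\ref{char} directly and note that the three types of factors $\prod_{j\in J_0} C_j$, $\prod_{h\in J_1}C_h$, and $\prod_{k\in J_2}C_k$ in the decomposition (\ref{CC}) are independent, so the count of Euclidean self-dual $C$ is simply the product of the counts of admissible tuples in each of the three parts. The proof is then a bookkeeping exercise across the three parts.

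First I would handle $J_0$. For $j\in J_0$ the cyclotomic coset has size $m_j=1$: this is clear for $j=0$, and for $j=m/2$ (which occurs only when $m$ is even, forcing $p$ to be odd since $p\nmid m$) one has $p^s$ odd, so $p^s\cdot\frac{m}{2}\equiv\frac{m}{2}\pmod{m}$. Hence $R_{p^2}(u,sm_j)={\rm GR}(p^2,s)[u]/\langle u^{p^a}-1\rangle$, and by Proposition~\ref{char} the condition on $C_j$ is Euclidean self-duality, giving $N_{\rm E}({\rm GR}(p^2,s),p^a)$ choices per $j$. Since $|J_0|=\delta(m)$, this part contributes $N_{\rm E}({\rm GR}(p^2,s),p^a)^{\delta(m)}$. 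Next, for each $h\in J_1$, the coset size $m_h$ is even (as recalled just before the remark on the Discrete Fourier Transform coefficients), so $sm_h$ is even and Hermitian self-duality is well-defined in $R_{p^2}(u,sm_h)$; by Proposition~\ref{char} the admissible $C_h$ are exactly the Hermitian self-dual cyclic codes of length $p^a$ over ${\rm GR}(p^2,sm_h)$, contributing the factor $\prod_{h\in J_1}N_{\rm H}({\rm GR}(p^2,sm_h),p^a)$.

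Finally, for $J_2$ I would use the pairing $J_2=J_2^\prime\sqcup J_2^{\prime\prime}$, with $J_2^{\prime\prime}=\{-h\mid h\in J_2^\prime\}$. By Proposition~\ref{char}, the constraint is $C_{m-k\,({\rm mod}\,m)}=C_k^{\perp_{\rm E}}$ for each $k\in J_2$, so once $C_k$ is chosen freely for each $k\in J_2^\prime$, the companion $C_{-k}$ in $R_{p^2}(u,sm_{-k})=R_{p^2}(u,sm_k)$ is uniquely determined. Each pair therefore contributes $N({\rm GR}(p^2,sm_k),p^a)$ independent choices. Multiplying the three parts yields the claimed formula, with the empty product convention handling degenerate cases such as $m=1$ (where $J_1=J_2=\emptyset$). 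I anticipate no real obstacle: the only technical points are the verifications that $m_j=1$ for $j\in J_0$ and $m_h$ is even for $h\in J_1$, both of which are immediate from the preliminaries in Section~3.
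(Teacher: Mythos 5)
Your proposal is correct and follows essentially the same route as the paper: invoke Proposition~\ref{char} to translate Euclidean self-duality of $C$ into independent conditions on the components indexed by $J_0$, $J_1$, and $J_2$, then multiply the counts, with the $J_2^{\prime\prime}$ components uniquely determined by those in $J_2^{\prime}$. Your extra verifications that $m_j=1$ for $j\in J_0$ and that $m_h$ is even for $h\in J_1$ are correct details the paper leaves implicit.
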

\begin{proof}
From Proposition \ref{char}, a code  $C$ decomposed as in (\ref{CC}) is Euclidean self-dual if and only if  $C_j$ is Euclidean self-dual  for all $j\in J_0$, $C_h$ is Hermitian self-dual  for all $h\in J_1$, and  $C_k = C^{\perp_{\rm E}}_{m-k\,({\rm mod}\,m)}$    for all $k\in J_2$.

The number of Euclidean (resp., Hermitian) self-dual cyclic codes of length $p^a$ corresponding to $J_0$ (resp., $J_1$) is $\prod_{j\in J_0} N_{\rm E}({\rm GR}(p^2,s),p^a ) $ (resp., $\prod_{h\in J_1} N_{\rm H}( {\rm GR}(p^2,sm_h),p^a )$). The number of  choices of cyclic codes of length $p^a$ corresponding to $J_2^\prime$ is $\prod_{k\in J_2^\prime} N({\rm GR}(p^2,sm_k),p^a  )$.  Since  $C_k = C^{\perp_{\rm E}}_{m-k\,({\rm mod}\,m)}$    for all $k\in J_2$,   there is only one possibility for   codes corresponding to $J_2^{\prime\prime}$.

Therefore, the number of Euclidean self-dual codes of length $n=mp^a$ over ${\rm GR}(p^2,s)$ is 
\[  \prod_{j\in J_0} N_{\rm E}({\rm GR}(p^2,s),p^a )   \times \prod_{h\in J_1} N_{\rm H}( {\rm GR}(p^2,sm_h),p^a ) \times \prod_{k\in J_2^\prime} N({\rm GR}(p^2,sm_k),p^a  ). \]
Since $|J_0|=1$ if $m$ is odd and $|J_0|=2$ if $m$ is even, the result follows.
\end{proof}
We note that,  for each positive integer $t$, $\{0\}$, $p{\rm GR}(p^2,st)$, and  ${\rm GR}(p^2,st)$ are  all cyclic codes of length $1$ over ${\rm GR}(p^2,st)$ and  $p{\rm GR}(p^2,st)$ is the only Euclidean self-dual cyclic code. Hence, $N_{\rm E}({\rm GR}(p^2,st),1 )=1$, and $   N({\rm GR}(p^2,st),1  )=3$ for all positive integers $t$.  If $st$ is even, then $p{\rm GR}(p^2,st)$  is the only Hermitian self-dual cyclic code, and hence, $N_{\rm H}( {\rm GR}(p^2,st),1 ) =1$. 

 For $a\geq 1$, the numbers  $N_{\rm E}$, $N_{\rm H}$, and $N$ have been determined in Corollary \ref{N-cyclic2}, Proposition \ref{NEC}, and Theorem \ref{NHC}, respectively. 

Therefore, the number in Corollary \ref{comp} is completely determined. 

\begin{corollary}\label{pm} Let $m$ be a positive  integer be such that $p\nmid m$. 
	Then 
	$ N_{\rm E}({\rm GR}(p^2,s),mp )= 1$ if and only if $m=1$ and $p=2$.
\end{corollary}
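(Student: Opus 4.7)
The plan is to apply the enumeration formula in Corollary~\ref{comp} with $a = 1$ and $n = mp$, and then determine when each factor of the resulting product equals $1$. Since every factor is a positive integer, the entire product equals $1$ if and only if each individual factor equals $1$, so the question reduces to three independent lower bound checks.

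First, I would specialize the three ingredients to $a = 1$. From Proposition~\ref{NEC}, $N_{\rm E}({\rm GR}(p^2, s), p) = 1$ if $p = 2$ and equals $2$ if $p$ is odd. From Theorem~\ref{NHC}, applied with $s$ replaced by $sm_h$ (which is even since $h \in J_1 \subseteq I_1 \setminus I_0$ forces $m_h$ even), the Hermitian factor simplifies to $p^{sm_h/2} + 1 \geq 3$. From Corollary~\ref{N-cyclic2}, a short calculation (using $\lfloor d/2 \rfloor = 0$ for $d \in \{0,1\}$ and $\min\{\lfloor d/2 \rfloor, 1\} = 1$ otherwise) gives $N({\rm GR}(p^2, sm_k), p) \geq 5 + p^{sm_k} \geq 7$.

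From these estimates both directions follow quickly. If $p$ is odd, the Euclidean factor alone contributes $2^{\delta(m)} \geq 2$, so $N_{\rm E}({\rm GR}(p^2, s), mp) > 1$ and no odd prime can work. If $p = 2$, then $p \nmid m$ forces $m$ to be odd, so $\delta(m) = 1$, $I_0 = \{0\}$, and the Euclidean factor equals $1$. Since each remaining factor is at least $3$, the product equals $1$ exactly when $J_1 = J_2' = \emptyset$, equivalently when $I_1 \cup I_2 = \emptyset$. This means the only $2^s$-cyclotomic coset modulo $m$ is $\{0\}$, which forces $m = 1$. Conversely, when $m = 1$ and $p = 2$, the decomposition is trivial and yields exactly $N_{\rm E}({\rm GR}(2^2, s), 2) = 1$.

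There is no substantive obstacle beyond carefully reading off the $a = 1$ values from the three enumerations recalled earlier; the key observation is that both $N_{\rm H}$ and $N$ are bounded below by quantities strictly greater than $1$ whenever the corresponding index set is non-empty, which immediately isolates $(m, p) = (1, 2)$ as the unique possibility.
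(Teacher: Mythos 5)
Your proposal is correct and follows essentially the same route as the paper: both reduce the statement to Corollary~\ref{comp} and observe that the Euclidean factor exceeds $1$ for odd $p$ (via Proposition~\ref{NEC}) while the Hermitian and unrestricted-cyclic factors exceed $1$ whenever $J_1$ or $J_2$ is nonempty (via Theorem~\ref{NHC} and Corollary~\ref{N-cyclic2}), which happens exactly when $m>1$. Your explicit lower bounds $p^{sm_h/2}+1\geq 3$ and $5+p^{sm_k}\geq 7$ are just a more quantitative version of the paper's ``$>1$'' assertions, so no substantive difference.
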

\begin{proof}
Assume that $m>1$ or $p$ is odd.
\\{\bf Case 1.} $m>1$. 

If $J_2$ is not empty, then, by Corollaries \ref{N-cyclic2} and  \ref{comp},   $N_{\rm E}({\rm GR}(p^2,s),mp)\geq N( {\rm GR}(p^2,sm_h),p)>1$ for all $h\in J_2$. Assume that $J_2$ is empty. Since $m>1$, $J_1$ is not empty. Then, by Theorem \ref{NHC} and Corollary \ref{comp}, $N_{\rm E}( {\rm GR}(p^2,s),mp)\geq N_{\rm H}( {\rm GR}(p^2,sm_h),p)>1$ for all $h\in J_1$.
\\ {\bf Case 2.}  $p$ is odd. 

By  Proposition \ref{NEC} and Corollary \ref{comp}, $N_{\rm E}({\rm GR}(p^2,s), mp)\geq  N_{\rm E}({\rm GR}(p^2,s), p)=2 > 1$.
	
Conversely, we assume that $m=1$ and $p=2$. Then, by  Proposition \ref{NEC},	$ N_{\rm E}({\rm GR}(p^2,s),mp )=N_{\rm E}({\rm GR}(4,s),2 )= 1$.
\end{proof}
 
Some examples of the numbers of Euclidean self-dual cyclic codes of small lengths over $\mathbb{Z}_4$, over $\mathbb{Z}_9$, and over ${\rm GR}(4,2 )$,  are given in Tables \ref{T1},   \ref{T11}, and \ref{T2}, respectively. 

\begin{table}[!hbt]
\centering 
\caption{Number  of Euclidean Self-Dual Cyclic Codes over $\mathbb{Z}_4$}
\vskip.5em
\begin{tabular}{|c|c|c|c|c|c|c|c|c|c|c|c|c|c|c|c|}
\hline
$n$ &$1$ &$2$ &$3$&$4$&$5$&$6$&$7$&$8$&$9$&$10$\\ \hline
$N_{\rm E}(\mathbb{Z}_4, n)$&
1& 1& 1& 3& 1& 3& 1& 11& 1& 5\\\hline\hline

$n$ &$11$&$12$&$13$&$14$&$15$&$16$ &$17$ &$18$&$19$&$20$\\ \hline
$N_{\rm E}(\mathbb{Z}_4, n)$ & 1& 21& 1& 13& 1&
 59& 1& 27& 1& 63\\ \hline\hline

$n$&$21$&$22$&$23$&$24$&$25$&$26$&$27$&$28$&$29$&$30$\\ \hline
$N_{\rm E}(\mathbb{Z}_4, n)$& 1& 33& 1& 341& 1& 65& 1& 339& 1& 315\\ \hline\hline

$n$  &$31$ &$32$ &$33$&$34$&$35$&$36$&$37$&$38$&$39$&$40$\\ \hline
$N_{\rm E}(\mathbb{Z}_4, n)$& 1& 1019& 1& 289& 1& 1533& 1& 513& 1& 3751\\
\hline
\end{tabular}
\label{T1}
\end{table}

\begin{table}[!hbt]
\centering 
\caption{Number  of Euclidean Self-Dual Cyclic Codes over $\mathbb{Z}_9$}
\vskip.5em
\begin{tabular}{|c|c|c|c|c|c|c|c|c|c|c|c|c|c|c|c|}
\hline
$n$ &$1$ &$2$ &$3$&$4$&$5$&$6$&$7$&$8$&$9$&$10$\\ \hline
$N_{\rm E}(\mathbb{Z}_9, n)$&1& 1& 2& 1& 1& 4& 1& 1& 8& 1
 \\\hline\hline

$n$ &$11$&$12$&$13$&$14$&$15$&$16$ &$17$ &$18$&$19$&$20$\\ \hline
$N_{\rm E}(\mathbb{Z}_9, n)$ & 1& 16& 1& 1& 20& 1& 1& 64& 1& 1 \\ \hline\hline

$n$&$21$&$22$&$23$&$24$&$25$&$26$&$27$&$28$&$29$&$30$\\ \hline
$N_{\rm E}(\mathbb{Z}_9, n)$& 56& 1& 1& 544& 1& 1& 242& 1& 1& 400\\ \hline\hline

$n$  &$31$ &$32$ &$33$&$34$&$35$&$36$&$37$&$38$&$39$&$40$\\ \hline
$N_{\rm E}(\mathbb{Z}_9, n)$&  1& 1& 1472& 1& 1& 2560& 1& 1& 15488& 1 \\
\hline
\end{tabular}
\label{T11}
\end{table}

\begin{table}[!hbt]
\centering
\caption{Number  of Euclidean Self-Dual Cyclic Codes over ${\rm GR}(4,2 )$}
\vskip.5em
\begin{tabular}{|c|c|c|c|c|c|c|c|c|c|c|c|c|c|c|c|}
\hline
$n$ &$1$ &$2$ &$3$&$4$&$5$&$6$&$7$&$8$&$9$&$10$\\ \hline
$N_{\rm E}({\rm GR}(4,2),n )$&1& 1& 1& 5& 1& 9& 1& 37& 1& 25
 \\\hline\hline

$n$ &$11$&$12$&$13$&$14$&$15$&$16$ &$17$ &$18$&$19$&$20$\\ \hline
$N_{\rm E}({\rm GR}(4,2),n )$ &  1& 225& 1& 69& 1& 677& 1& 621& 1& 2205 \\ \hline\hline

$n$&$21$&$22$&$23$&$24$&$25$&$26$&$27$&$28$&$29$&$30$\\ \hline
$N_{\rm E}({\rm GR}(4,2),n )$&1&1029& 1& 29193& 1& 4225& 1& 22125& 1& 99225 \\ \hline\hline

$n$  &$31$ &$32$ &$33$&$34$&$35$&$36$&$37$&$38$&$39$&$40$\\ \hline
$N_{\rm E}({\rm GR}(4,2),n )$&  1& 174757& 1& 83521& 1& 995625& 1&
262149& 1& 4302397\\
\hline
\end{tabular}
 
\label{T2}
\end{table}
In general, the numbers of Euclidean self-dual cyclic codes of any  length  over ${\rm GR}(p^2,s)$, where $p$ is a prime and $s$ is a positive integer, can be computed using the formula in   Corollary~\ref{N-cyclic2}, Proposition \ref{NEC},  Theorem \ref{NHC}, and  Corollary \ref{comp}.

\subsection{A Note on Euclidean Self-dual Cyclic Codes of Even Length over $\mathbb{Z}_4$}
In this subsection, we reconsider   cyclic codes of even length over $\mathbb{Z}_4$ which have been studied in \cite{B2003} and  \cite{DL2006}. This case can be viewed as a special case of the previous two subsections  where  $p=2$ and $s=1$.  We discovered that   \cite[Lemma 7, Lemma 9, and Corollary 2]{B2003} and  \cite[Lemma 5.2, Corollary 5.4, Proposition 5.8, Corollary 5.9, and Section 6]{DL2006}     contain   errors.   Corrections to these errors are discussed as follows.

 Let $n$ be an even  positive integer written as $n=m2^a$, where $m$ is odd and $a\geq 1$. Then, in the decomposition (\ref{DEC}), we have $J_0=\{0\}$ and every  cyclic code  $C$ of length $n$ over  $\mathbb{Z}_4$  can be viewed as 
\begin{align}\label{C4}
C\cong  C_0 \times \prod_{h \in   J_1 } C_h \times \prod_{k\in  J_2} C_k,
\end{align}
where $C_0$, $C_k$, and $C_k$ are cyclic codes of length $2^a$ over   appropriate Galois extensions of  $\mathbb{Z}_4$.

We note that all the errors in \cite{B2003} and \cite{DL2006} are caused by misinterpretation of the orthogonality in   \cite[Lemma 6]{B2003} and \cite[Equation (21)]{DL2006} (see also (\ref{eq-ortho})). This effects  the components in the  decomposition of the Euclidean dual of $C$ in  (\ref{C4}) that relates to the $2$-cyclotomic cosets of elements in $J_1$.  The errors are pointed out in terms of  our notations. The readers  may refer to the original statements   via the number cited. The corrections to these  are discussed as well.

\begin{itemize}
\item  The Euclidean dual of $C$  in (\ref{C4}).

In \cite[Lemma 7 and  Lemma 9]{B2003} and  \cite[Lemma 5.2 and Corollary 5.4]{DL2006}, an incorrect  interpretation has been made as follows.

\hskip1em \parbox{0.8\textwidth}{\it 
\begin{align*} \text{\lq\lq}
C^{\perp_{\rm E}}\cong   C_0^{\perp_{\rm E}} \times \prod_{h \in    J_1 } C_h^{\perp_{\rm E}} \times \prod_{k\in  J_2} C^{\perp_{\rm E}}_{m-k\,({\rm mod}\,m)}. \text{\rq\rq}
\end{align*}
}

The Euclidean dual of $C$  in (\ref{C4})  can be viewed as a special case of   Proposition \ref{char}, where   $p=2$ and $s=1$, {\it i.e.},  
 \begin{align*} 
C^{\perp_{\rm E}}\cong  C_0^{\perp_{\rm E}} \times \prod_{h \in    J_1 } C_h^{\perp_{\rm H}} \times \prod_{k\in  J_2} C^{\perp_{\rm E}}_{m-k\,({\rm mod}\,m)}.
\end{align*}

 \item The     number of Euclidean self-dual cyclic codes   over $\mathbb{Z}_4$.

Since the determination of the  Euclidean dual of a cyclic code in  \cite{B2003} and \cite{DL2006}  is not correct, in \cite[Lemma 9]{B2003}  and  \cite[Proposition 5.8]{DL2006}, an incorrect statement about  the number of Euclidean self-dual cyclic codes of length $n=m2^a$ over $\mathbb{Z}_4$ has been proposed as follows.

\vskip1em
\hskip2em \parbox{0.85\textwidth}{\it ``  
	The     number of Euclidean self-dual cyclic codes of length $n=m2^a$ over $\mathbb{Z}_4$ is 
	\[    \prod_{h\in \{0\}\cup J_1} N_{\rm E}( {\rm GR}(4,m_h),2^a ) \times \prod_{k\in J_2^\prime} N({\rm GR}(4,m_k),2^a  ).\text{\rq\rq}\] 
}
\vskip1em
The correct statement can be viewed as a special case of Corollary \ref{comp} where $p=2$ and $s=1$.
 \begin{corollary}
	The   number of Euclidean self-dual codes of length $n=m2^a$ over $\mathbb{Z}_4$ is 
	\[   N_{\rm E}(\mathbb{Z}_4,2^a )  \times \prod_{h\in J_1} N_{\rm H}( {\rm GR}(4, m_h),2^a ) \times \prod_{k\in J_2^\prime} N({\rm GR}(4, m_k),2^a  ) .\]
	where  
  the empty product is regarded as $1$.
\end{corollary}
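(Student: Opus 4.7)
The plan is to deduce this corollary directly from Corollary~\ref{comp} by specializing the general enumeration formula to the case $p=2$, $s=1$. Recall that in the subsection setup we have $n = m2^a$ with $m$ odd and $a \geq 1$, so the underlying ring is $\mathbb{Z}_4 = {\rm GR}(2^2,1)$, and the Discrete Fourier Transform decomposition (\ref{C4}) applies.

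First I would note that since $m$ is odd, the set $I_0$ consists only of $\{0\}$, and hence $J_0 = \{0\}$ and $|J_0| = 1$. Therefore $\delta(m) = 1$ in the notation of Corollary~\ref{comp}. Substituting $p=2$, $s=1$, and $\delta(m)=1$ into the formula of Corollary~\ref{comp} gives
\[
\left(N_{\rm E}({\rm GR}(2^2,1),2^a)\right)^{1} \times \prod_{h\in J_1} N_{\rm H}({\rm GR}(2^2,m_h),2^a) \times \prod_{k\in J_2^\prime} N({\rm GR}(2^2,m_k),2^a),
\]
which is exactly the claimed expression after identifying ${\rm GR}(2^2,1) = \mathbb{Z}_4$ and ${\rm GR}(2^2,m_h) = {\rm GR}(4,m_h)$.

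For the proof to be self-contained, I would briefly justify the applicability of Corollary~\ref{comp}: each Galois extension ${\rm GR}(4,m_h)$ for $h \in J_1$ has even degree $m_h$ (since self-inverse $2$-cyclotomic cosets excluding $I_0$ have even size, as noted in the earlier remark on self-inverse cosets), so Hermitian duality is well-defined on $R_4(u,m_h)$, and Proposition~\ref{char} then yields the characterization that $C$ is Euclidean self-dual if and only if the $J_0$-component is Euclidean self-dual, the $J_1$-components are Hermitian self-dual, and the $J_2$-components are paired dually. Counting the independent choices for each factor gives the product.

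The only step requiring attention is keeping track of which exponent $\delta(m)$ appears: since this corollary explicitly restricts to $m$ odd, we are in the simpler case $\delta(m)=1$ and no extra factor arises, which is precisely how the statement differs from the (erroneous) version in \cite{B2003} and \cite{DL2006}. There is no real obstacle here; the work is simply to translate Corollary~\ref{comp} into the $\mathbb{Z}_4$ language and to remind the reader that the empty product is taken to be $1$, which handles the cases where $J_1$ or $J_2^\prime$ is empty.
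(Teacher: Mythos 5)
Your proposal is correct and follows exactly the route the paper takes: the paper presents this corollary as the special case $p=2$, $s=1$ of Corollary~\ref{comp}, with $m$ odd forcing $\delta(m)=1$ so that the factor $\left(N_{\rm E}(\mathbb{Z}_4,2^a)\right)^{\delta(m)}$ reduces to $N_{\rm E}(\mathbb{Z}_4,2^a)$. Your added remark that the self-inverse cosets in $J_1$ have even size (so Hermitian duality makes sense on $R_4(u,m_h)$) is a sensible, if optional, piece of bookkeeping that the paper leaves implicit.
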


\item A unique Euclidean self-dual code of length $n=2m$ over $\mathbb{Z}_4$.

In  \cite[Corollary 2]{B2003} and  \cite[Corollary 5.9]{DL2006},   it has been claimed that
  
\vskip1em
\hskip2em \parbox{0.85\textwidth}{\it ``
If there exists an integer $e$ such that $-1\equiv 2^e\,{\rm mod}\, n$, then there is only one Euclidean self-dual cyclic code of length $2n$ where $n$ is odd, namely $2(\mathbb{Z}_4)^{2n}$.''
}
\vskip1em
 This claim is not correct and 
  the correct statement can be viewed as a special case of  Corollary \ref{pm} with $p=2$ and $s=1$.  It can be restated as follows.

\begin{corollary}  Let $m$ be an odd positive  integer. 
	Then 
	$ N_{\rm E}(\mathbb{Z}_4,2m )= 1$ if and only if $m=1$.
\end{corollary}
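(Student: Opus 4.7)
The plan is to obtain this statement as a direct specialization of the already-established Corollary~\ref{pm}. First I would observe that $\mathbb{Z}_4$ is exactly the Galois ring ${\rm GR}(2^2,1)$, so the notation $N_{\rm E}(\mathbb{Z}_4, 2m)$ coincides with $N_{\rm E}({\rm GR}(p^2,s), mp)$ after setting $p=2$ and $s=1$. Since $m$ is odd, the hypothesis $p\nmid m$ of Corollary~\ref{pm} is automatically satisfied.

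Next I would apply Corollary~\ref{pm} with these parameter choices. Corollary~\ref{pm} asserts that $N_{\rm E}({\rm GR}(p^2,s), mp) = 1$ if and only if $m=1$ and $p=2$; with $p=2$ already fixed, the condition collapses to $m=1$. This yields precisely the equivalence $N_{\rm E}(\mathbb{Z}_4, 2m) = 1 \iff m=1$, which is the desired statement.

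There is no real obstacle here, since all the substantive work has been done in Corollary~\ref{pm}, whose proof in turn rests on the decomposition in Corollary~\ref{comp}, Proposition~\ref{NEC} (for the factor $N_{\rm E}(\mathbb{Z}_4, 2)=1$), Theorem~\ref{NHC} (forcing Hermitian self-dual factors to outnumber one whenever $J_1 \neq \emptyset$), and Corollary~\ref{N-cyclic2} (forcing more than one cyclic code whenever $J_2 \neq \emptyset$). The only minor thing worth making explicit is that for odd $m>1$, at least one of $J_1$ or $J_2$ is nonempty (since $\{0\}$ cannot exhaust $\{0,1,\dots,m-1\}$), so Case~1 of Corollary~\ref{pm} applies, forcing $N_{\rm E}(\mathbb{Z}_4,2m)>1$. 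Conversely, for $m=1$, Proposition~\ref{NEC} gives $N_{\rm E}(\mathbb{Z}_4, 2) = 1$ directly. Thus the corollary follows without further calculation.
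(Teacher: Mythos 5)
Your proposal is correct and coincides with the paper's own treatment: the paper presents this corollary precisely as the special case $p=2$, $s=1$ of Corollary~\ref{pm} (with $p\nmid m$ automatic since $m$ is odd), giving no further argument. Your additional remarks tracing the dependencies back through Corollaries~\ref{comp} and~\ref{pm} are accurate but not needed beyond the specialization itself.
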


 \end{itemize}

This error in \cite{DL2006} led to an incorrect claim   that there is only one Euclidean self-dual cyclic codes of length $n$ in the cases where $n=6$ and   $n=10$  (see \cite[Section 6]{DL2006}).  Actually, in Table \ref{T1}, the numbers of such codes are $3$ and $5$, respectively. Here,    the    codes are also provided.

\begin{example}  The  Euclidean self-dual cyclic codes of length $6$ and  $10$ over $\mathbb{Z}_4$ are determined as follows. 
	\begin{itemize}%[~~~$\bullet$]
		\item $n=6$. Then $m=3$ and the $2$-cyclotomic cosets modulo $3$ are $\{0\}$ and $\{1,2\}$. Both the $2$-cyclotomic cosets are self-inverse, which implies that  $I_0=\{0\}$ and $I_1=\{1,2\}$, and hence, $J_0=\{0\}$ and $J_1=\{1\}$. Therefore, 
		\[\mathbb{Z}_4[X]/\langle X^6-1\rangle\cong R_4(u,1)\times R_4(u,2).\]
By Proposition \ref{NEC} and Theorem \ref{NHC}, $N_{\rm E}(\mathbb{Z}_4,2)=1$ and  $N_{\rm H}({\rm GR}(4,2), 2)=3$. 
Therefore, by Corollary \ref{comp}, we have $N_{\rm E}(\mathbb{Z}_4,6)=3$, given by the codes $\langle 2\rangle \times \langle 2\rangle$, $\langle 2\rangle \times \langle 1+u+2\xi\rangle$, and $\langle 2\rangle \times \langle1+u+2\xi^2\rangle$, where $\xi$  is the generator of a Teichm\"{u}ller set $\mathcal{T}_2$. 
		
		\item $n=10$.  Then $m=5$ and the $2$-cyclotomic cosets modulo $5$ are $\{0\}$ and $\{1,2,3,4\}$. Both the $2$-cyclotomic cosets are self-inverse, which implies that  $I_0=\{0\}$ and $I_1=\{1,2,3,4\}$, and hence, $J_0=\{0\}$ and $J_1=\{1\}$. Therefore, 
		\[\mathbb{Z}_4[X]/\langle X^{10}-1\rangle\cong R_4(u,1)\times R_4(u,4).\]
By Proposition \ref{NEC} and Theorem \ref{NHC}, $N_{\rm E}(\mathbb{Z}_4,2)=1$ and  $N_{\rm H}({\rm GR}(4,4), 2)=5$. 
Therefore, by Corollary \ref{comp}, we have $N_{\rm E}(\mathbb{Z}_4,10)=5$.
	\end{itemize}
\end{example}

\section{Conclusion}\label{sec5}
The complete characterization and enumeration   of Hermitian self-dual cyclic codes of length $p^a$ over ${\rm GR}({p^2},s)$ has been established.  Using the Discrete Fourier Transform decomposition, we have characterized the structure of Euclidean self-dual cyclic codes of any length   over ${\rm GR}({p^2},s)$.  The enumeration of such codes has been given through this decomposition, our results on Hermitian self-dual codes, and some  known results on cyclic codes of   length  $p^a$ over ${\rm GR}({p^2},s)$.   Based on the established  characterization and enumeration, we have   corrected    mistakes in some earlier results on  Euclidean self-dual cyclic codes of even length over $\mathbb{Z}_4$ in \cite{B2003}  and \cite{DL2006}.  

\section*{Acknowledgements}
Parts of this work were done when S. Jitman was a research fellow and E. Sangwisut was a visiting student  at  the Division of Mathematical Sciences,
  School of Physical and Mathematical Sciences, Nanyang Technological
  University, Singapore. They would like to thank Nanyang Technological
  University for the support and hospitality.

\end{document}